\title{Compact Lie groups isolated up to conjugacy}
\author[B. Csikós]{Balázs Csikós}
\address{Eötvös Loránd University, Institute of Mathematics, Budapest, Pázmány P. stny. 1/C.}
\email{balazs.csikos@ttk.elte.hu}
\author[T. Kátay]{Tamás Kátay}
\address{Eötvös Loránd University, Institute of Mathematics, Budapest, Pázmány P. stny. 1/C.}
\email{13heted@gmail.com}
\author[A. Kocsis]{Anett Kocsis}
\address{Eötvös Loránd University, Institute of Mathematics, Budapest, Pázmány P. stny. 1/C.}
\email{sakkboszi@gmail.com}
\author[M.~Pálfy]{Máté Pálfy}
\address{Eötvös Loránd University, Institute of Mathematics, Budapest, Pázmány P. stny. 1/C.}
\email{palfymateandras@gmail.com}
\begin{document}
	\subjclass[2020]{22A05, 22E15, 54B20}
	\keywords{Vietoris topology, space of compact subgroups}
	\maketitle
	
	\begin{abstract}
		The set $\cals(G)$ of compact subgroups of a Hausdorff topological group $G$ can be equipped with the Vietoris topology. A compact subgroup $K\in\cals(G)$ is \emph{isolated up to conjugacy} if there is a neighborhood $\calu\subseteq\cals(G)$ of $K$ such that every $L\in\calu$ is conjugate to $K$. In this paper, we characterize compact subgroups of a Lie group that are isolated up to conjugacy. Our characterization depends only on the  \emph{intrinsic} structure of $K$,  the ambient Lie group $G$ and the embedding of $K$ into $G$ are irrelevant. 
		In addition, we prove that any continuous homomorphism from a compact group $G$ onto a compact Lie group $H$ induces a continuous open map from $\cals(G)$ onto $\cals(H)$.
	\end{abstract}
	
	\section{Introduction}
	
	Every topological space in this paper is assumed to be Hausdorff.
	
	For any topological space $X$, one may equip the set $\calk(X)$ of non-empty compact subsets of $X$ with the so-called Vietoris topology (for the definition see Subsection~\ref{ss.the_hyperspace}). The space $\calk(X)$ is often called the \emph{hyperspace of compact subsets of $X$}. In fact, $\calk$ is a functor from the category of topological spaces into itself, which assigns to a continuous map $f\colon X\to Y$ the continuous map $\calk(f)\colon \calk(X)\to \calk(Y)$ defined by $\calk(f)\colon  A\mapsto f(A)$ for $A\in \calk(X)$. 
	
	For a topological group $G$, the set $\cals(G)$ of compact subgroups of $G$ is a closed subspace of $\calk(G)$. We call $\cals(G)$ \emph{the (hyper)space of compact subgroups}. Setting $\cals(f)=\calk(f)$ for any continuous homomorphism $f$ between two topological groups, $\cals$ becomes a functor from the category of topological groups into the category of topological spaces. S.~Fisher and P.~Gartside \cite{fisher_gartside_1}, \cite{fisher_gartside_2} studied basic properties of $\cals(G)$ in the case of a compact group $G$, with an emphasis on profinite groups. In this paper, we deal with other topological aspects of the functor $\cals$.
	\begin{remark}
		Fell \cite{Fell} defined a variant of the Vietoris topology on the family of closed subsets of a locally compact space. Fell's topology can also be used to equip the space of closed (or compact) subgroups of a locally compact group with a very interesting and useful topology. Fell's topology on the set of closed subgroups of a locally compact topological group was defined by Chabauty \cite{Chabauty} much before Fell's work in order to extend Mahler's compactness theorem on lattices in a Euclidean space to lattices in a locally compact topological group. The Chabauty--Fell topology and the Vietoris topology on closed subgroups of a topological group coincide when the ambient group is compact, but there are crucial differences between them in the general case, so they should not be confused. 
	\end{remark} 
	
	It is natural to ask which compact subgroups of a given topological group $G$ can be approximated by other compact subgroups (with respect to the Vietoris topology)? That is, what are the limit points of $\cals(G)$? For a compact connected group $G$, every non-normal compact subgroup $K\leq G$ is a limit point since $K$ can be approximated by its conjugates (for any neighborhood $U$ of the identity of $G$ pick any $g\in U\setminus N_G(K)$ and take $gKg^{-1}$). Therefore, one may want to refine the question to exclude this trivial possibility. We say that a compact subgroup $K$ of a topological group $G$ is \emph{isolated up to conjugacy (in $G$)} if there is a neighborhood $\calu\subseteq\cals(G)$ of $K$ such that every $L\in\calu$ is conjugate to $K$. The refined question is the following:
	
	\begin{question}
		Which compact subgroups of a topological group $G$ are isolated up to conjugacy?
	\end{question}
	
	A variant of this problem was studied by A.~M.~Turing \cite{Turing}. He proved that if a connected Lie group can be approximated by finite subgroups, then it is compact and abelian.\footnote{We warn the reader that Turing formulates his theorem without the assumption of connectedness, but this condition cannot be omitted as the example of the direct product of a torus and a non-commutative finite group shows. Actually, the proof of Turing works (only) if the group $G$ is connected.} Another variant was studied by D.~Montgomery and L.~Zippin \cite{Montgomery_Zippin}. Although they did not formulated their theorem as a result on the space of compact subgroups, it implies that whenever a compact subgroup $K$ of a Lie group $G$ is not isolated up to conjugacy, $K$ can be approximated by its proper subgroups. 
	
	Building on the works of Turing, Montgomery and Zippin we obtain the following result (Theorem \ref{t.isolated_characterized}), which answers also Question~6.6. in \cite{generic_top_groups}.
	
	\begin{theorem*}
		Let $G$ be a Lie group. For a compact subgroup $K\leq G$ the following are equivalent:
		
		(1) The identity component $K^{\circ}$ of $K$ is not perfect, i.e., $K^\circ\neq (K^\circ)'$.
		
		(2) The subgroup $K$ has a quotient of the form
		$$\circle^m\rtimes F,$$
		where $m>0$,  $\circle^m$ is an $m$-dimensional torus and $F$ is a finite group.
		
		(3) There is a sequence of closed proper subgroups $K_n\lneqq K$ such that $K_n{\to}K$.
		
		(4) The point $K\in\cals(G)$ is not isolated up to conjugacy.
		
		(5) The conjugacy  set $\{gKg^{-1}\mid g\in G\}$ of $K$ is not open in $\cals(G)$.
	\end{theorem*}
	
	The above theorem fails to be true for arbitrary topological groups $G$. Indeed, the trivial subgroup $\{e_G\}$ of $G$ never satisfies condition (1), but it satisfies condition (4) if $G$ does not have the NSS (no small subgroup) property.
	
	As condition (1) depends only on the intrinsic structure of $K$, the theorem also implies that a compact Lie group $K$ is isolated up to conjugacy in an arbibtrary Lie group $G$ if and only if it is isolated up to conjugacy in one single Lie group $G\ge K$, for example in itself. 
	
	We can define a quasi-order $\succeq$ on the family of  compact Lie groups by saying $G\succeq H$ for the compact Lie groups $G$ and $H$ if and only if there is a continuous epimorphism $G\twoheadrightarrow H$. As usual, calling two compact Lie groups $G$ and $H$ \emph{quasi-isomorphic} if both $G\succeq H$ and $H\succeq G$, the quasi-order $\succeq$ induces a partial order on the quasi-isomorphism classes. The equivalence  (2)$\iff$(4) shows that the quasi-isomorphism classes of compact Lie groups not isolated up to conjugacy in a Lie group form an upper set in this poset, and property (2) allows us to describe minimal elements in this set. These minimal elements are represented by groups of the form $\circle^m\rtimes_{\alpha} F$, where $m>0$, and $F$ is a finite group with a faithful and rationally irreducible conjugacy representation $\alpha\colon F\to\textrm{GL}(m,\mathbb Z)\cong \textrm{Aut}(\circle^m)$. (See Proposion \ref{prop:minimal_groups}.)
	
	In Section~\ref{s.an_open_map}, we turn to another problem concerning the space of compact subgroups. We shall show that if $X$, $Y$ are topological spaces, $X$ is locally compact and $f\colon X\to Y$ is a surjective continuous open map, then the induced map $\calk(f)\colon\calk(X)\to\calk(Y)$ is also a surjective continuous open map. (See Proposition \ref{p.continuous_image} below.)  
	
	Since any surjective continuous homomorphism between compact groups is open, one encounters the above situation quite often in the study of compact groups. Thus the following problem arises:
	
	\begin{question}\label{q.open}
		For which compact groups $G$ and $H$ is it true that for any surjective continuous homomorphism $f\colon G\to H$ the map $\cals(f)\colon\cals(G)\to\cals(H)$ is open?
	\end{question}
	
	As a further application of the above quoted theorem of Montgomery and Zippin, we prove that this holds if $G$ is a compact topological group and $H$ is a compact Lie group. (See Theorem \ref{t.open_map}.)
	
	\section{Preliminaries}
	
	\subsection{The hyperspace of compact subsets}\label{ss.the_hyperspace}
	
	For a topological space $X$ let $\calk(X)$ denote the set of non-empty compact subsets of $X$. It is a topological space with the Vietoris topology, that is, the topology generated by basis elements of the form
	\begin{equation}\tag{$\star$}
		\calv(U_0,\dots,U_n)=\{K\in \calk(X):\ K\subseteq U_0\text{ and }\ K\cap U_i\neq\emptyset\text{ for }1\leq i\leq n\}
	\end{equation}
	with $n\in\nat$ and $U_0,\ldots,U_n$ open in $X$.
	
	If $(X,d)$ is a metric space, then $\calk(X)$ is a metric space with the Hausdorff metric
	$$d_H(K,L)\defeq \inf\{\eps>0:\ K_\eps\supseteq L,\ L_\eps\supseteq K\},$$
	where $A_\eps$ is the open $\eps$-neigborhood of the set $A$. It is well-known that the Hausdorff metric induces the Vietoris topology. It is also well-known that $\calk(X)$ inherits several topological properties of $X$. For example, if $X$ is compact, then $\calk(X)$ is also compact. For details, see \cite[Sec.~4.F]{KECHRIS} and \cite[Thm.~4.2]{MICHAEL}.
	
	The following lemma collects some useful facts. 
	
	\begin{lemma}\label{lem:_folytonossag}\mbox{ }
		
		(1) For any continuous map $f\colon X\to Y$, the induced map $\calk(f)\colon \calk(X)\to \calk(Y)$ is continuous.
		
		(2) For any topological space $X$, the map  $\iota_X\colon X\to \calk(X)$, $\iota_X(x)=\{x\}$ is a topological embedding.  
		
		(3) For any family of topological spaces $\langle X_i:i\in I\rangle$, the map $P\colon \bigtimes_{i\in I}\calk(X_i)\to \calk\left(\bigtimes_{i\in I}X_i\right)$, $\langle A_i\in\calk(X_i):i\in I\rangle\mapsto \bigtimes_{i\in I}A_i$ is continuous.
	\end{lemma}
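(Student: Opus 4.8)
The plan is to prove the three parts separately, each reducing to a standard fact about the Vietoris topology.

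For part (1), I would work with the subbasic open sets. Recall the Vietoris topology is generated by sets of the form $\{K : K \subseteq U\}$ and $\{K : K \cap U \neq \emptyset\}$ for $U$ open. It suffices to show that $\calk(f)$ pulls each of these back to an open set. For $\calk(f)^{-1}(\{L : L \subseteq V\})$ with $V \subseteq Y$ open, I claim this equals $\{K : K \subseteq f^{-1}(V)\}$, which is open since $f^{-1}(V)$ is open. The inclusion uses that $f(K) \subseteq V \iff K \subseteq f^{-1}(V)$. For $\calk(f)^{-1}(\{L : L \cap V \neq \emptyset\})$, I would similarly show it equals $\{K : K \cap f^{-1}(V) \neq \emptyset\}$, using $f(K) \cap V \neq \emptyset \iff K \cap f^{-1}(V) \neq \emptyset$. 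One should also note that $\calk(f)$ is well-defined, i.e. $f(K)$ is a non-empty compact subset of $Y$ whenever $K$ is one of $X$, which follows from continuity of $f$.

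For part (2), I would check that $\iota_X$ is a continuous injection that is a homeomorphism onto its image. Injectivity and the fact that $\{x\}$ is compact are immediate. Continuity: the preimage $\iota_X^{-1}(\calv(U_0,\dots,U_n))$ consists of those $x$ with $\{x\} \subseteq U_0$ and $\{x\} \cap U_i \neq \emptyset$ for $i \geq 1$, i.e. $x \in U_0 \cap U_1 \cap \dots \cap U_n$, which is open. For the inverse map on the image, I would show $\iota_X$ is open onto its image: the image of an open $U \subseteq X$ is $\{\{x\} : x \in U\} = \calv(U) \cap \iota_X(X)$, which is relatively open. This gives that $\iota_X$ is a topological embedding.

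For part (3), the map $P$ sends a tuple of compact sets to their product, which is compact by Tychonoff, so $P$ is well-defined. To prove continuity I would verify that the preimage of a subbasic open set is open. The delicate subbasic type is $\{C : C \subseteq W\}$ for $W$ open in the product $\bigtimes_i X_i$; the point is that $\bigtimes_i A_i \subseteq W$ need not factor through a single basic box, so I would use a tube-lemma / compactness argument: given $\bigtimes_i A_i \subseteq W$, cover the product by finitely many basic open boxes contained in $W$, and then use the finitely many coordinates appearing in these boxes to produce a basic open neighborhood of $\langle A_i \rangle$ in $\bigtimes_i \calk(X_i)$ whose image lands in $\{C : C \subseteq W\}$. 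For the subbasic sets $\{C : C \cap W \neq \emptyset\}$, after shrinking $W$ to a basic box $\bigtimes_i V_i$ (only finitely many $V_i \neq X_i$), the condition $\bigtimes_i A_i \cap \bigtimes_i V_i \neq \emptyset$ is equivalent to $A_i \cap V_i \neq \emptyset$ for every $i$, an open condition in finitely many coordinates. \emph{The main obstacle is this last part}: handling the $\{C : C \subseteq W\}$ subbasic sets requires the compactness of the product together with a careful extraction of the finitely many relevant coordinates, since an open $W$ in an infinite product is only a union of basic boxes and one cannot reduce to a single box a priori.
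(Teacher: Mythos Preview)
Your arguments for (1) and (2) are essentially the paper's: the paper records the single identity
$\calk(f)^{-1}\big(\calv(U_0,\dots,U_n)\big)=\calv\big(f^{-1}(U_0),\dots,f^{-1}(U_n)\big)$
for (1) and $\iota_X^{-1}\big(\calv(U_0,\dots,U_n)\big)=\bigcap_{i=0}^n U_i$ for (2), which are exactly your subbasic computations packaged into basic sets. Your extra remark that $\iota_X(U)=\calv(U)\cap\iota_X(X)$ to get openness onto the image is a welcome explicitness the paper leaves implicit.

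For (3), your plan is correct and your identification of the obstacle is accurate, but the step you flag is precisely the one that needs an explicit construction, and your sketch does not yet supply it. Covering $\bigtimes_i A_i$ by finitely many basic boxes $W_{b_1},\dots,W_{b_N}\subseteq W$ and collecting the finitely many ``live'' coordinates is not enough by itself: the naive choice $V_0^i=\bigcup_k W_{b_k}^i$ fails, since $A_i'\subseteq V_0^i$ for every $i$ does not force $\bigtimes_i A_i'\subseteq \bigcup_k W_{b_k}$. The paper resolves this by a pointwise refinement: for each $a_i\in A_i$ it intersects only those box-factors $W_{b_k}^i$ coming from boxes that actually meet the fiber $\pi_i^{-1}(a_i)$, and then takes the union over $a_i\in A_i$; this $V_0^i$ is open and has the property that any $\langle A_i'\rangle\in\bigtimes_i\calv(V_0^i,\dots)$ again has product contained in $\bigcup_k W_{b_k}\subseteq W$. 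A second, minor difference is organizational: the paper treats the full basic neighborhood $\calv(U_0,\dots,U_n)$ at once, choosing each covering box $W_a$ to lie in every $U_j$ containing $a$, so that the containment and intersection conditions are handled simultaneously; your subbasic decomposition is equally valid and makes the ``intersection'' case trivial, at the cost of isolating all the difficulty into the ``containment'' case you still need to finish.
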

	\begin{remark}
		Statement (3) is claimed without proof in \cite[Lem. 1]{fisher_gartside_1} for a pair of spaces, and left as an exercise for a pair of metric spaces in \cite[Ex. 4.29.vii]{KECHRIS}. We include the proof for the convenience of the reader.
	\end{remark}
	
	\begin{proof}
		(1) is a consequence of the identity \[\calk(f)^{-1}\big(\calv(U_0,\dots,U_n)\big)=\calv\big(f^{-1}(U_0),\dots,f^{-1}(U_n)\big)\quad \forall\, U_0,\dots,U_n\text{ open in }Y,\]
		where $\calv({\dots})$ is defined by equation $(\star)$. 
		
		(2) follows from the identity 
		\[\iota_X^{-1}\big(\calv(U_0,\dots,U_n)\big)=\bigcap_{i=0}^n U_i\quad \forall\, U_0,\dots,U_n\text{ open in }X.\]
		
		To prove (3), take an arbitrary element $\langle A_i:i\in I\rangle\in \bigtimes_{i\in I}\calk(X_i)$ and an open neighborhood $\calv(U_0,\dots,U_n)$ of its image $A:=\bigtimes_{i\in I}A_i$. Choose for all $a=\langle a_i:i\in I\rangle\in  A$ an open neighborhood $W_a$ of $a$ such that \begin{itemize}
			\item $W_a$ has the form $W_a=\bigtimes_{i\in I}W_a^i$\,, where $W_a^i$ is an open subset of $X_i$ and $W_a^i=X_i$ for all but a finite number of $i\in I$;
			\item $W_a\subseteq U_j$ whenever $a\in U_j$, $0\leq j\leq n$.
		\end{itemize}
		Since $A$ is compact, one can find a finite number of points $b_1,\dots b_N\in A$ such that $ A\subseteq \bigcup_{k=1}^N W_{b_k}$. For $i'\in I$, let $\pi_{i'}\colon \bigtimes_{i\in I}X_i \to X_{i'}$ be the canonical projection and define the open subset $V_0^i\subseteq X_i$ by
		\[
		V_0^i=\bigcup_{a_i\in A_i}\Big(\bigcap_{\pi_i^{-1}(a_i)\cap W_{b_k}\neq\emptyset} W_{b_k}^i\Big).
		\]
		
		For $1\leq j\leq n$, choose an element $a_j\in  A\cap U_j$ and set $V_j^i=W_{a_j}^i$.
		
		It is straightforward to check that $\bigtimes_{i\in I}\calv(V_0^i,\dots,V_n^i)$ is an open neighborhood of 
		${\langle A_i:i\in I\rangle}$ which is mapped by $P$ into $\calv(U_0,\dots,U_n)$.
	\end{proof}
	
	\begin{prop}\label{p.continuous_image}
		Let $X$, $Y$ be topological spaces, and assume that $X$ is locally compact. Then for any continuous surjective open map $f\colon X\to Y$, the map $\calk(f)\colon \calk(X)\to\calk(Y)$ is also surjective and open.
	\end{prop}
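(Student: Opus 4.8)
The plan is to deduce both surjectivity and openness from a single \emph{lifting lemma}: for every open set $U\subseteq X$ and every compact set $L\subseteq f(U)$, there is a compact set $K\subseteq U$ with $f(K)=L$. This is where local compactness enters, and it is the heart of the argument. To prove it, for each $y\in L$ I would pick a preimage $x_y\in U$ with $f(x_y)=y$ (possible since $L\subseteq f(U)$) and, using that $X$ is locally compact Hausdorff and $U$ is open, a compact neighborhood $C_y$ with $x_y\in\operatorname{int}C_y\subseteq C_y\subseteq U$. Because $f$ is open, the sets $f(\operatorname{int}C_y)$ form an open cover of $L$, so compactness of $L$ yields finitely many indices $y_1,\dots,y_m$ whose neighborhoods already cover $L$. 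Setting $C=\bigcup_{j=1}^m C_{y_j}$ (a compact subset of $U$) and $K=C\cap f^{-1}(L)$, I obtain a compact set: $L$ is closed since $Y$ is Hausdorff, hence $f^{-1}(L)$ is closed and $K$ is a closed subset of the compact set $C$. By construction $f(K)\subseteq L$, and the reverse inclusion holds because every $y\in L$ lies in some $f(\operatorname{int}C_{y_j})$, producing a point of $C\cap f^{-1}(L)=K$ that maps to $y$.

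Surjectivity of $\calk(f)$ is then immediate: apply the lemma with $U=X$ and an arbitrary $L\in\calk(Y)$, using $f(X)=Y$. For openness, I would first note that it suffices to show $\calk(f)$ sends each basic open set $\calv(U_0,\dots,U_n)$ to an open set, since $\calk(f)\big(\bigcup_\alpha B_\alpha\big)=\bigcup_\alpha\calk(f)(B_\alpha)$. Observe also that the open set $\calv(U_0,\dots,U_n)$ is unchanged if we replace each $U_i$ (for $1\le i\le n$) by $U_i\cap U_0$, so I may assume $U_i\subseteq U_0$ throughout. Under this assumption the goal becomes the clean identity
\[
\calk(f)\big(\calv(U_0,\dots,U_n)\big)=\calv\big(f(U_0),\dots,f(U_n)\big),
\]
whose right-hand side is a basic open set because $f$ is open. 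The inclusion $\subseteq$ is routine: if $K\in\calv(U_0,\dots,U_n)$ then $f(K)\subseteq f(U_0)$, while $f(K)\cap f(U_i)\supseteq f(K\cap U_i)\neq\emptyset$ for $1\le i\le n$.

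For the reverse inclusion $\supseteq$, given $L\in\calv(f(U_0),\dots,f(U_n))$ I would apply the lifting lemma with $U=U_0$ to obtain a compact $K_1\subseteq U_0$ with $f(K_1)=L$, then for each $i$ pick $z_i\in U_i$ with $f(z_i)\in L$ (possible since $L\cap f(U_i)\neq\emptyset$), and finally set $K=K_1\cup\{z_1,\dots,z_n\}$. Then $K$ is compact, $K\subseteq U_0$ (as $U_i\subseteq U_0$), $K\cap U_i\neq\emptyset$, and $f(K)=f(K_1)\cup\{f(z_i)\}=L$, so $L$ lies in the image, completing the identity and hence the proof of openness. I expect the main obstacle to be precisely this reverse inclusion: one must lift $L$ to a compact set confined to $U_0$ and simultaneously arrange that it meets every $U_i$ without enlarging its image. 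The reduction to $U_i\subseteq U_0$ together with the local-compactness lifting lemma is exactly what resolves this tension.
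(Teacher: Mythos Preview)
Your argument is correct. The surjectivity step is essentially the paper's: both cover $L$ by images of interiors of finitely many compact sets and take the preimage of $L$ inside their union. For openness, however, you take a different route. After the harmless normalization $U_i\subseteq U_0$, you prove the clean identity $\calk(f)\big(\calv(U_0,\dots,U_n)\big)=\calv\big(f(U_0),\dots,f(U_n)\big)$, using your lifting lemma once (with $U=U_0$) and then adjoining finitely many points $z_i$ to force intersection with each $U_i$. The paper instead works pointwise on the image: given $L=f(K)$ with $K\in\calv(U_0,\dots,U_n)$, it thickens $K$ to a single compact $C$ with $K\subseteq\interior C\subseteq C\subseteq U_0$, sets $V_i=f(U_i\cap\interior C)$, and shows that $\calv(V_0,\dots,V_n)$ is a neighborhood of $L$ contained in the image by lifting each $\hat L$ via $(f|_C)^{-1}$. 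Your version yields more information (an explicit basic-open description of the image) and isolates the role of local compactness in a reusable lemma; the paper's version avoids the normalization step and the extra point-adjoining, producing the lift in one stroke. Both rely on the same underlying fact about locally compact Hausdorff spaces.
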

	
	\begin{proof}
		
		First we prove that $\calk(f)$ is surjective. Pick any $L\in \calk(Y)$. Let $C_x$ be a compact neighborhood of $x$ for every $x\in X$. Then $\bigcup_{x\in X} f(\interior C_x)$ is an open cover of $Y$, hence there exist finitely many $C_{x_1},\ldots,C_{x_n}$ such that the image of $C=\bigcup_{i=1}^n C_{x_i}$ covers $L$. Thus $K\defeq f|_C^{-1}(L)$ is a compact set that maps onto $L$.
		
		To show that $\calk(f)$ is open, it is enough to check that for any choice of the the open sets $U_0,\dots,U_n\subseteq X$, the $\calk(f)$ image of  $\calv(U_0,\dots,U_n)$ defined by $(\star)$ is open in $\calk(Y)$. Take an arbitrary element $L\in \calk(f)\big(\calv(U_0,\dots,U_n)\big)$ and choose $K\in \calv(U_0,\dots,U_n)$ such that $f(K)=L$. Since $X$ is locally compact, we can find $C\in\calk(X)$ such that $K\subseteq\interior C\subseteq C\subseteq U_0$. Put $V_i\defeq f(U_i\cap \interior C)$ for $0\leq i\leq n$. Observe that the sets $V_i$ are open in $Y$ and $\calv(V_0,\dots,V_n)$ is an open neighborhood of $L$.
		
		Furthermore, for any $\hat L\in \calv(V_0,\dots,V_n)$,  the set $\hat K\defeq (f|_C)^{-1}(\hat L)$ is in $\calv(U_0,\dots,U_n)$ and $f$ maps $\hat K$ onto $\hat L$, therefore $\calv(V_0,\dots,V_n)\subseteq \calk(f)\big(\calv(U_0,\dots,U_n)\big)$ and this completes the proof.
	\end{proof}
	
	\subsection{The hyperspace of compact subgroups}
	
	As the group operations of a topological group $G$ are continuous, Lemma \ref{lem:_folytonossag} implies the continuity of some frequently used actions on $\calk(G)$.
	
	\begin{lemma}\label{l.right_multip_cont}
		For a topological group $G$, the actions of $G$ on $\calk(G)$ by left translations, right translations, and conjugations given by the maps $L\colon G\times \calk(G)\to \calk(G)$, $L(g,K)=gK$; $R\colon  \calk(G) \times G \to \calk(G)$, $R(K,g)=Kg$; and $C\colon G\times \calk(G)\to \calk(G)$, $C(g,K)=gKg^{-1}$ respectively, are continuous. 
	\end{lemma}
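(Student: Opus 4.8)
The plan is to realize each of the three actions as a composition of maps already known to be continuous, exploiting the functoriality of $\calk$ together with parts (2) and (3) of Lemma~\ref{lem:_folytonossag}. Write $\mu\colon G\times G\to G$, $\mu(g,h)=gh$, for the multiplication map and $\kappa\colon G\times G\to G$, $\kappa(g,h)=ghg^{-1}$, for the conjugation map; both are continuous precisely because $G$ is a topological group. The whole point is to record the set-theoretic identities that express $gK$, $Kg$, and $gKg^{-1}$ as $\calk$-images of products of singletons with $K$, and then to read off continuity from the lemma.

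For the left translation action, observe that $gK=\mu(\{g\}\times K)=\calk(\mu)(\{g\}\times K)$. Hence $L$ factors as
\[
G\times\calk(G)\xrightarrow{\iota_G\times\mathrm{id}}\calk(G)\times\calk(G)\xrightarrow{P}\calk(G\times G)\xrightarrow{\calk(\mu)}\calk(G),
\]
where the first map is continuous because $\iota_G$ is an embedding (Lemma~\ref{lem:_folytonossag}(2)) and the identity is continuous, the second is continuous by Lemma~\ref{lem:_folytonossag}(3) applied to the pair $G,G$, and the third is continuous by Lemma~\ref{lem:_folytonossag}(1). The continuity of $L$ follows. The right translation $R$ is handled identically, using $Kg=\calk(\mu)(K\times\{g\})$ and the factorization through $\mathrm{id}\times\iota_G$, then $P$, then $\calk(\mu)$; the only bookkeeping difference is that here the source is $\calk(G)\times G$, so the embedding $\iota_G$ is applied in the second coordinate.

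For the conjugation action $C$, the only change is to replace $\mu$ by $\kappa$. Since $gKg^{-1}=\{\kappa(g,h):h\in K\}=\calk(\kappa)(\{g\}\times K)$, the map $C$ factors as $\calk(\kappa)\circ P\circ(\iota_G\times\mathrm{id})$, and the same three citations yield continuity. I do not anticipate a genuine obstacle: the mathematical content is entirely in choosing the correct factorizations, and once the identities $gK=\calk(\mu)(\{g\}\times K)$, $Kg=\calk(\mu)(K\times\{g\})$, and $gKg^{-1}=\calk(\kappa)(\{g\}\times K)$ are in place, the conclusion is immediate from Lemma~\ref{lem:_folytonossag}. The only points requiring minimal care are the continuity of $\mu$ and $\kappa$ (which is built into the definition of a topological group) and keeping track of which coordinate carries the singleton embedding in each of the three cases.
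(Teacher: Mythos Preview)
Your proof is correct and follows exactly the approach indicated in the paper: the paper states only that the lemma is a consequence of Lemma~\ref{lem:_folytonossag} together with the continuity of the group operations, and your argument spells out precisely this, factoring each action through $\iota_G$, $P$, and $\calk$ applied to the relevant continuous map on $G\times G$.
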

	
	\begin{prop}
		For any topological group $G$ the set
		$$\cals(G)\defeq\{K\in \calk(G):\ K\text{ is a subgroup of }G\}$$
		is closed in $\calk(G)$.
	\end{prop}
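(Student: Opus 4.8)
The plan is to write $\cals(G)$ as the set of those $K\in\calk(G)$ that satisfy the two algebraic conditions $KK\subseteq K$ and $K^{-1}\subseteq K$, and to show that each condition defines a closed subset of $\calk(G)$. Since the members of $\calk(G)$ are non-empty by definition, a non-empty compact set closed under multiplication and under inversion automatically contains the identity (pick $a\in K$; then $a^{-1}\in K$ and $aa^{-1}=e_G\in K$) and is therefore a subgroup, while conversely every compact subgroup satisfies both conditions. Thus $\cals(G)=A\cap B$ with $A\defeq\{K:KK\subseteq K\}$ and $B\defeq\{K:K^{-1}\subseteq K\}$, and it suffices to prove that $A$ and $B$ are closed. (Alternatively one may use the single condition $KK^{-1}\subseteq K$ and run the argument below once with the continuous map $(x,y)\mapsto xy^{-1}$.)

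To see that $A$ is closed, I would show that its complement is open by producing, around each $K\notin A$, a basic Vietoris neighborhood still contained in the complement. Fix $K$ with some $a,b\in K$ for which $c\defeq ab\notin K$. As $G$ is Hausdorff and $K$ is compact, the point $c$ and the set $K$ can be separated by disjoint open sets $U\ni c$ and $V\supseteq K$. By continuity of multiplication there are open neighborhoods $A_0\ni a$ and $B_0\ni b$ with $A_0B_0\subseteq U$. Then $K\in\calv(V,A_0,B_0)$, and for any $L\in\calv(V,A_0,B_0)$ one can pick $a'\in L\cap A_0$ and $b'\in L\cap B_0$; their product lies in $A_0B_0\subseteq U$, which is disjoint from $V\supseteq L$, so $a'b'\notin L$ and $L\notin A$. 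Hence $\calv(V,A_0,B_0)$ avoids $A$, proving $A$ closed.

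The argument for $B$ is the same with inversion in place of multiplication. Given $K\notin B$, choose $a\in K$ with $a^{-1}\notin K$, separate $a^{-1}$ from $K$ by disjoint opens $U\ni a^{-1}$ and $V\supseteq K$, and set $U^{-1}\defeq\{u^{-1}:u\in U\}$, which is an open neighborhood of $a$ since inversion is a homeomorphism. Then $K\in\calv(V,U^{-1})$, and every $L\in\calv(V,U^{-1})$ contains some $b\in U^{-1}$, whence $b^{-1}\in U$ is disjoint from $V\supseteq L$, so $b^{-1}\notin L$ and $L\notin B$. Thus $B$ is closed, and $\cals(G)=A\cap B$ is closed, as claimed.

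The main obstacle, such as it is, lies in the interplay between the Hausdorff separation and the Vietoris basic open sets: the whole point is to choose the first coordinate $U_0=V$ of the basic neighborhood large enough to still contain $L$, yet disjoint from the region $U$ into which the relevant products (resp. inverses) are forced by continuity; this is exactly what makes the offending element $a'b'$ (resp. $b^{-1}$) provably escape $L$. Everything else is routine once one recalls that compactness of $K$ is precisely what permits the separation of $K$ from a single exterior point.
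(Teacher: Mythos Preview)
Your proof is correct. It differs from the paper's in method: the paper observes that $\cals(G)$ is exactly the fixed-point set of the continuous self-map $K\mapsto K\cdot K^{-1}$ on $\calk(G)$, where continuity is obtained by factoring this map through the diagonal $\calk(G)\to\calk(G)\times\calk(G)$, the product map $P\colon\calk(G)\times\calk(G)\to\calk(G\times G)$ of Lemma~\ref{lem:_folytonossag}(3), and $\calk(D)$ for the division map $D(a,b)=ab^{-1}$; closedness then follows because fixed-point sets of continuous self-maps on Hausdorff spaces are closed. Your argument is instead a direct open-complement construction in the Vietoris basis, avoiding any appeal to Lemma~\ref{lem:_folytonossag}. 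The paper's route is shorter once the continuity lemmas are in place and exhibits $\cals(G)$ structurally as a fixed-point locus; your route is self-contained and makes explicit how the Hausdorff separation of a point from a compact set interacts with the Vietoris subbasis, which is a useful template for similar closedness statements. Your parenthetical remark about handling everything at once via $KK^{-1}\subseteq K$ is essentially the paper's characterization, so the two approaches converge there.
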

	
	\begin{proof}
		Observe that
		$$\cals(G)=\{K\in\calk(G):\ K\cdot K^{-1}=K\}.$$
		The map $K\mapsto K\cdot K^{-1}$ equals the composition 
		\[\calk(G)\stackrel{\Delta}{\longrightarrow}\calk(G)\times \calk(G)\stackrel{P}{\longrightarrow} \calk(G\times G)\stackrel{\calk(D)}{\longrightarrow}\calk(G),\] 
		where $\Delta$ is the diagonal map, $P$ is defined as in Lemma \ref{lem:_folytonossag} (3) and $D\colon G\times G\to G$ is the division map $D(a,b)=ab^{-1}$, hence it is is continuous by Lemma \ref{lem:_folytonossag}, and its  fixed point set $\cals(G)$ is closed.
	\end{proof}
	
	\begin{defi}
		We call the above defined $\cals(G)$ \emph{the space of compact subgroups} of $G$.
	\end{defi}
	Note that for a compact metrizable group $G$, the space $\cals(G)$ is also compact metrizable.
	
	We will need the following well-known fact. 
	
	\begin{prop}[{\cite[Thm. 1.6]{OPEN_MAP}}]\label{p.quotient_map}
		Let $G,H$ be compact groups. If $\varphi\colon G\to H$ is a surjective continuous homomorphism, then it is a quotient map (of topological groups). In particular, it is open.
	\end{prop}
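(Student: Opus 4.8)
The plan is to factor $\varphi$ through the quotient by its kernel and then exploit compactness. First I would set $N\defeq\ker\varphi$, which is a closed normal subgroup of $G$: it is normal as the kernel of a homomorphism, and closed because $\varphi$ is continuous and $H$ is Hausdorff (so $\{e_H\}$ is closed and $N=\varphi^{-1}(e_H)$). The universal property of the quotient topology then furnishes a factorization $\varphi=\bar\varphi\circ\pi$, where $\pi\colon G\to G/N$ is the canonical projection and $\bar\varphi\colon G/N\to H$ is a continuous homomorphism. This $\bar\varphi$ is bijective: it is injective since we have divided out exactly the kernel, and surjective since $\varphi$ is.

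Next I would invoke the standard fact that the canonical projection $\pi\colon G\to G/N$ of a topological group is always an open map. Indeed, for any open $U\subseteq G$ one has $\pi^{-1}(\pi(U))=UN=\bigcup_{n\in N}Un$, which is a union of translates of $U$, hence open; by the definition of the quotient topology this means $\pi(U)$ is open. So $\pi$ is a continuous open surjection, independently of any compactness.

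The key step is to upgrade the continuous bijection $\bar\varphi$ to a homeomorphism. Here I would use the hypotheses: since $G$ is compact and $\pi$ is a continuous surjection, $G/N$ is compact, while $H$ is Hausdorff by the blanket assumption of the paper. A continuous bijection from a compact space onto a Hausdorff space is automatically a homeomorphism, because it carries closed (hence compact) subsets to compact (hence closed) subsets and is therefore a closed map. Consequently $\bar\varphi$ is open.

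Finally, $\varphi=\bar\varphi\circ\pi$ is a composition of two open maps, hence open; and a continuous open surjection is by definition a quotient map, which gives the stated conclusion. I do not anticipate a genuine obstacle, as every ingredient is classical; the one point deserving care is precisely the interplay in the previous paragraph, where compactness of $G$ (equivalently of $G/N$) together with the Hausdorff property of $H$ is what forces the continuous bijection $\bar\varphi$ to be a homeomorphism. Dropping compactness breaks the argument, so this is where the hypotheses are really consumed.
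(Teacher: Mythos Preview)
Your argument is correct and is the standard proof of this well-known fact. Note, however, that the paper does not actually supply its own proof of this proposition: it is stated with a citation to \cite[Thm.~1.6]{OPEN_MAP} and used as a black box, so there is no in-paper argument to compare your approach against.
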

	
	\begin{notation}\label{n.identity_component}For a topological group $G$ let $G^{\circ}$ denote the connected component of the identity of $G$ (also referred to as the identity component), which is a closed normal subgroup of $G$.
	\end{notation}
	
	\subsection{Lie groups}
	
	We recall some well-known facts about Lie groups.
	
	\begin{remark}
		Following Helgason \cite{Helgason}, we define a Lie subgroup of a Lie group $G$ as the image $\varphi(H)$ of a Lie group $H$ under a smooth injective immersion $\varphi\colon H\to G$ that is a group homomorphism as well.
	\end{remark}
	
	\begin{theorem}[See \cite{Serre}]
		\label{t.lie_group_fun_facts}\mbox{}
		
		(1) Let $H\leq G$ be Lie groups. If $\dim H=\dim G$, then $H^\circ=G^\circ$.
		
		(2) A quotient of a Lie group by a closed subgroup is a homogeneous $G$-manifold. In the special case of a closed normal subgroup, the quotient space is a Lie group.
		
		(3) For every Lie algebra $\frh$ there exists a unique connected, simply connected Lie group $H$ with Lie algebra $\frh$.
		
		Moreover, if $G$ is a Lie group with Lie algebra $\frg$ and $\psi\colon \frh\to\frg$ is a Lie algebra homomorphism, then there exists a unique smooth homomorphism $\varphi$ that makes the following diagram commute:
		
		\[\begin{tikzcd}
			{H} && G \\
			\\
			{\mathfrak{h}} && {\mathfrak{g}}
			\arrow["{\varphi}", from=1-1, to=1-3]
			\arrow["{\exp_{H}}", from=3-1, to=1-1]
			\arrow["\psi", hook, from=3-1, to=3-3]
			\arrow["{\exp_G}", from=3-3, to=1-3]
		\end{tikzcd}\]
		where $\exp$ is the exponential map. If $\psi$ is injective, then $\varphi$ is an immersion.
		
		(4) Any closed subgroup $H$ of a Lie group $G$ is a Lie subgroup of $G$. (Cartan's Theorem)
	\end{theorem}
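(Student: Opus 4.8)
The plan is to reduce everything to the fundamental correspondence between connected Lie groups and Lie subalgebras, treating Cartan's closed subgroup theorem (4) as the central and hardest ingredient and obtaining (1)--(3) from standard Lie theory. For (1), the identity component $H^\circ$ is a connected immersed Lie subgroup of $G$ whose Lie algebra $\mathfrak{h}$ is a subalgebra of $\mathfrak{g}=\mathrm{Lie}(G)$; since $\dim H=\dim G$ forces $\dim\mathfrak{h}=\dim\mathfrak{g}$ and hence $\mathfrak{h}=\mathfrak{g}$, the uniqueness of the connected immersed subgroup attached to a given subalgebra yields $H^\circ=G^\circ$. For (2), the quotient manifold theorem supplies a smooth structure on $G/H$ for which the projection is a submersion and $G$ acts smoothly and transitively; when $H$ is normal one checks that the induced multiplication and inversion are smooth, so $G/H$ is a Lie group. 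Part (3) is Lie's third theorem together with the integration of a Lie algebra homomorphism along a simply connected domain: $\varphi$ is obtained by integrating $\psi$, its uniqueness follows because $H$ is connected and is generated by the image of $\exp_H$, and $\varphi$ is an immersion precisely when $d\varphi=\psi$ is injective.

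The substance lies in (4). I would set
$$\mathfrak{h}=\{X\in\mathfrak{g}:\ \exp(tX)\in H\text{ for all }t\in\mathbb{R}\}$$
and first show that $\mathfrak{h}$ is a linear subspace closed under the bracket, using the standard limit formulas that express sums and brackets through products of exponentials and invoking the closedness of $H$ to keep these limits inside $H$. The decisive step is a ``no escape'' lemma: if $h_n\in H$, $h_n\to e$, $h_n\neq e$, and $h_n=\exp(X_n)$ with $X_n\to 0$ and $X_n/\lVert X_n\rVert\to Y$ for some fixed inner product norm, then $Y\in\mathfrak{h}$. Granting this, one finds a neighborhood $U$ of $0$ in $\mathfrak{g}$ on which $\exp$ is a diffeomorphism onto its image and $\exp(U\cap\mathfrak{h})=\exp(U)\cap H$; choosing a complement $\mathfrak{m}$ with $\mathfrak{g}=\mathfrak{h}\oplus\mathfrak{m}$ and using $(X,Y)\mapsto\exp(X)\exp(Y)$ as a chart exhibits $H$ near $e$ as an embedded submanifold. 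Left translation by elements of $H$ then propagates this chart over all of $H$, so $H$ is an embedded Lie subgroup of $G$.

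I expect the main obstacle to be the ``no escape'' lemma and the resulting inclusion $\exp(U)\cap H\subseteq\exp(U\cap\mathfrak{h})$, i.e.\ ruling out elements of $H$ arbitrarily close to $e$ whose logarithms have a component transverse to $\mathfrak{h}$. The classical argument writes such an element as $\exp(X)\exp(Y)$ with $X\in\mathfrak{h}$, $Y\in\mathfrak{m}$, $Y\neq 0$, extracts a convergent sequence of normalized transverse logarithms, and derives that the limiting direction lies in $\mathfrak{h}\cap\mathfrak{m}=\{0\}$, a contradiction. Making the uniformity precise and passing from limiting directions back to genuine one-parameter subgroups inside $H$ --- via the closedness of $H$ and compactness of the unit sphere --- is where the care is needed.
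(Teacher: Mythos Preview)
Your outline is the standard textbook treatment of these classical facts and is correct as far as it goes. However, there is nothing to compare against: the paper does not prove this theorem. It is stated as a collection of well-known results with the attribution ``See \cite{Serre}'' and is used as background throughout the paper without any argument supplied. So the paper's ``proof'' is simply a citation, and your sketch is essentially the content one would find in Serre or any standard Lie theory reference (e.g., the closed-subgroup theorem via the ``no small subgroups transverse to $\mathfrak h$'' argument, Lie's third theorem, and the quotient manifold theorem).

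If anything, you have done more than the paper asks: the authors treat Theorem~\ref{t.lie_group_fun_facts} as a black box, and a reader is expected to accept it on the strength of the reference.
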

	
	The following theorem describes a correspondence between normal Lie subgroups of a Lie group $G$ and ideals of its Lie algebra $\frg$, see \cite[Ch. II. p.128]{Helgason}.
	
	\begin{theorem}\label{t.normal_ideal}
		Let $G$ be a connected Lie group with Lie algebra $\frg$. Then $H\mapsto\frh$ is a bijection between the connected normal Lie subgroups of $G$ and the ideals of $\frg$. In particular, if the Lie algebra $\frh$ of a connected Lie subgroup $H\leq G$ is an ideal in $\frg$, then $H\nsub G$.
	\end{theorem}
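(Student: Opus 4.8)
The plan is to reduce everything to the classical bijective correspondence between connected Lie subgroups of $G$ and Lie subalgebras of $\frg$, and then to identify, under this correspondence, which subgroups are normal. First I would recall that for any Lie subalgebra $\frh\subseteq\frg$, Theorem \ref{t.lie_group_fun_facts}(3), applied to the inclusion $\psi\colon\frh\hookrightarrow\frg$, yields a connected simply connected Lie group with Lie algebra $\frh$ together with an immersion $\varphi$ into $G$; its image (after factoring out the discrete kernel $\ker\varphi$) is a connected Lie subgroup $H\leq G$ with Lie algebra $\frh$. Conversely, every connected Lie subgroup $H$ is generated by $\exp_G(\frh)$, since $\exp_G$ restricted to $\frh$ is a local diffeomorphism onto a neighborhood of the identity in $H$; hence $H$ is uniquely determined by $\frh$. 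This establishes that $H\mapsto\frh$ is a bijection between connected Lie subgroups and subalgebras, so it remains to prove that $H$ is \emph{normal} precisely when $\frh$ is an \emph{ideal}; the asserted bijection then follows by restriction.

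The bridge between the group and the algebra is the identity $c_g\circ\exp_G=\exp_G\circ\operatorname{Ad}(g)$, where $c_g(x)=gxg^{-1}$. To prove the ``in particular'' clause, suppose $\frh$ is an ideal, i.e.\ $\operatorname{ad}(X)(\frh)\subseteq\frh$ for every $X\in\frg$. Since $\operatorname{Ad}(\exp_G X)=\exp(\operatorname{ad}(X))$, the subspace $\frh$ is invariant under $\operatorname{Ad}(\exp_G X)$ for each $X$, and as $G$ is connected and thus generated by $\exp_G(\frg)$, we get $\operatorname{Ad}(g)(\frh)=\frh$ for all $g\in G$. The bridge identity then gives $c_g(\exp_G\frh)=\exp_G(\operatorname{Ad}(g)\frh)=\exp_G\frh$, and since $H$ is generated by $\exp_G\frh$ this forces $c_g(H)=H$ for all $g$, i.e.\ $H\nsub G$.

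For the converse, if $H\nsub G$ then $c_g(H)=H$ for every $g\in G$; because $H$ is connected, the differential $d(c_g)_e=\operatorname{Ad}(g)$ must preserve the tangent space $\frh=T_eH$, so $\operatorname{Ad}(g)(\frh)=\frh$. Substituting $g=\exp_G(tX)$ and differentiating the curve $t\mapsto\operatorname{Ad}(\exp_G tX)(Y)\in\frh$ at $t=0$ yields $[X,Y]=\operatorname{ad}(X)(Y)\in\frh$ for all $X\in\frg$ and $Y\in\frh$, so $\frh$ is an ideal. I expect the step that needs the most care to be the passage from the infinitesimal condition (that $\frh$ is an $\operatorname{ad}$-invariant subspace) to the global one (conjugation-invariance of $H$): this genuinely relies on connectedness of $G$, to pass from $\exp_G(\frg)$ to all of $G$, and on the fact that $H$ is recovered from $\frh$ via the exponential, so that $\operatorname{Ad}$-invariance of $\frh$ propagates to $c_g$-invariance of $H$.
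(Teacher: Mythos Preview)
The paper does not supply its own proof of this theorem: it is stated as a classical fact with a reference to \cite[Ch.~II, p.~128]{Helgason}, and the text moves directly on to Theorem~\ref{t.lie_alg_decomposition}. So there is nothing to compare against beyond the literature.

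Your argument is the standard one and is correct. The two directions are handled cleanly via the identities $\operatorname{Ad}(\exp_G X)=\exp(\operatorname{ad} X)$ and $c_g\circ\exp_G=\exp_G\circ\operatorname{Ad}(g)$, together with the fact that a connected Lie group is generated by the image of its exponential map. The only mild imprecision is in the phrase ``the differential $d(c_g)_e=\operatorname{Ad}(g)$ must preserve the tangent space $\frh=T_eH$'': since $H$ is merely an immersed (not necessarily embedded) subgroup, one should say this a bit more carefully, but $c_g$ restricts to a Lie group automorphism of $H$ (it is a group isomorphism and is smooth because $c_g\circ\varphi$ is a smooth homomorphism into $G$ with image $H$), and the differential of that restriction is $\operatorname{Ad}(g)|_{\frh}$, which gives what you need. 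Your remark that connectedness of $G$ is essential for passing from $\operatorname{ad}$-invariance to $\operatorname{Ad}$-invariance is exactly the point to flag.
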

	
	Recall that a Lie algebra $\frg$ is called compact if it is the Lie algebra of a compact Lie group (cf. \cite[Ch. II. Cor. 6.7]{Helgason}). The following statements are well-known.
	
	\begin{theorem}\label{t.lie_alg_decomposition} \mbox{}
		
		(1) The Lie algebra $\frg$ is compact  if and only if $\frg$ can be equipped with a positive definite $\ad_{\frg}$-invariant symmetric bilinear function. In particular, every subalgebra of a compact Lie algebra is compact.
		
		(2) Any compact Lie algebra $\frg$ is reductive, hence it can be decomposed as $\frg=[\frg,\frg]\oplus \real^l$, where $[\frg,\frg]$ is compact and semisimple, the summand $\real^l$ is the center of $\frg$, see \cite[Ch. II. Prop. 6.6 (ii)]{Helgason}. Moreover, if $G$ is a compact connected Lie group with Lie algebra $\frg$, then the commutator subgroup $G'$ of $G$ is a compact connected Lie subgroup of $G$ with Lie algebra $[\frg,\frg]$, see \cite[Cor. 6.31 (iii)]{Hofmann_Morris}.
		
		(3) Any semisimple compact Lie algebra $\frg$ can be decomposed as the direct sum $\frg_1\oplus\ldots\oplus\frg_k$ of compact simple Lie algebras $\frg_i$. Any ideal of $\frg$ is the direct sum of some of the ideals $\frg_i$, see \cite[Ch. II. Cor. 6.3]{Helgason}
		
		(4) Any connected Lie group with a compact semisimple Lie algebra is compact. 
	\end{theorem}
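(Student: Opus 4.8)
The plan is to reduce to the universal cover and then exploit the adjoint representation together with the $\ad_\frg$-invariant inner product. First I would apply Theorem~\ref{t.lie_group_fun_facts}(3) to obtain the connected, simply connected Lie group $\tilde G$ with Lie algebra $\frg$. Any connected Lie group $G$ with Lie algebra $\frg$ is the quotient of $\tilde G$ by a discrete central subgroup, so as a continuous image of $\tilde G$ it is compact once $\tilde G$ is. Hence it suffices to prove that $\tilde G$ itself is compact.

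Next, since $\frg$ is compact, Theorem~\ref{t.lie_alg_decomposition}(1) supplies a positive definite $\ad_\frg$-invariant inner product $\langle\cdot,\cdot\rangle$ on $\frg$. Invariance means every $\ad_X$ is skew-symmetric, so every $\mathrm{Ad}_{\exp X}=\exp(\ad_X)$ is orthogonal; as $\tilde G$ is connected it is generated by the image of $\exp$, and therefore the adjoint representation maps $\tilde G$ into the orthogonal group $O(\frg,\langle\cdot,\cdot\rangle)$, which is compact. Because $\frg$ is semisimple it has trivial center, so $\ad\colon\frg\to\mathrm{Der}(\frg)$ is injective, $\mathrm{Ad}$ is a local diffeomorphism, and its kernel is precisely the discrete center $Z(\tilde G)$. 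The image $\mathrm{Ad}(\tilde G)=\mathrm{Int}(\frg)$ is the identity component of the closed subgroup $\mathrm{Aut}(\frg)\le\mathrm{GL}(\frg)$ and is thus closed; being a closed subset of the compact group $O(\frg)$, it is compact. So $\mathrm{Ad}$ exhibits $\tilde G$ as a covering group of a compact Lie group with fiber $Z(\tilde G)\cong\pi_1\big(\mathrm{Int}(\frg)\big)$.

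The remaining, and genuinely hard, step is to show this fiber is finite, i.e.\ that the fundamental group of the compact semisimple group $\mathrm{Int}(\frg)$ is finite: once that is known, $\tilde G$ is a finite cover of a compact space and hence compact. I expect this to be the real obstacle, the rest being essentially bookkeeping. The cleanest resolution I know is differential-geometric: equip $\tilde G$ with the bi-invariant metric induced by $\langle\cdot,\cdot\rangle$. Such a metric is geodesically complete, and a standard computation yields $\mathrm{Ric}(X,X)=\tfrac14\sum_i|[X,e_i]|^2=-\tfrac14 B(X,X)$ for an orthonormal basis $(e_i)$, where $B$ is the Killing form. Since $\frg$ is compact and semisimple, $-B$ is positive definite, so the Ricci curvature is bounded below by a positive constant; the Bonnet--Myers theorem then forces $\tilde G$ to be compact. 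In fact this last step proves compactness of $\tilde G$ directly and could replace the covering-space discussion altogether.

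An alternative, more algebraic route would reduce via Theorem~\ref{t.lie_alg_decomposition}(3) to the case of a simple $\frg$ and then quote Weyl's finiteness theorem for the fundamental group; but some non-algebraic input (Bonnet--Myers, or an explicit analysis of the root and weight lattices) seems unavoidable at exactly the point where finiteness of the center is established, which confirms that this is where the essential content of the statement lies.
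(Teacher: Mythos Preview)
Your argument is correct and rests on exactly the same idea as the paper's: endow the group with the bi-invariant metric coming from the $\ad_{\frg}$-invariant inner product, compute that the Ricci curvature is strictly positive (the paper writes $Ric(x,x)=\tfrac14\|\ad(x)\|_{HS}^2$, which is your $-\tfrac14 B(x,x)$), and invoke Myers' theorem. The only difference is packaging: the paper skips your covering-space preamble and instead records the conclusion as the slightly sharper Theorem~\ref{t.exp_surj}, namely that $\exp_G$ is surjective from a ball of a fixed radius $\Delta$ depending only on $(\frg,\langle\cdot,\cdot\rangle)$---a uniformity it needs again later in the proof of Theorem~\ref{t.isolated_characterized}.
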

	The last statement (4) is a corollary of the following theorem. 
	
	\begin{theorem}\label{t.exp_surj}
		Let $\frg$ be a compact semisimple Lie algebra with a fixed $\ad_{\frg}$-invariant inner product $\langle\,,\rangle$, $\varrho$ be the corresponding Euclidean metric on $\frg$. Then there exists $\Delta>0$ such that $\exp_G(B_\varrho(0,\Delta))=G^{\circ}$ for any group $G$ with Lie algebra $\frg$.
	\end{theorem}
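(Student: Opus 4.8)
The plan is to realize each connected group $G^{\circ}$ with Lie algebra $\frg$ as a Riemannian manifold on which $\exp_G$ is the Riemannian exponential map at the identity, and then to extract surjectivity together with a \emph{uniform} bound on the required radius from curvature estimates that depend only on $\frg$ and $\langle\,,\rangle$, not on the particular group $G$.

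First I would equip $G^{\circ}$ with the left-invariant Riemannian metric whose value at the identity is $\langle\,,\rangle$. Since $G^{\circ}$ is connected, the $\ad_{\frg}$-invariance of $\langle\,,\rangle$ integrates to $\mathrm{Ad}(G^{\circ})$-invariance, so this metric is in fact bi-invariant. For a bi-invariant metric it is classical that $\nabla_X Y=\tfrac12[X,Y]$ on left-invariant fields, and hence the geodesics through $e$ are exactly the one-parameter subgroups $t\mapsto\exp_G(tX)$. Consequently the Riemannian exponential at $e$ coincides with $\exp_G\colon\frg\to G^{\circ}$, and the length of $t\mapsto\exp_G(tX)$ for $t\in[0,1]$ equals $\varrho(0,X)=|X|$.

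Next I would invoke two global theorems of Riemannian geometry. Because the left translations act transitively by isometries, $G^{\circ}$ is a homogeneous, hence complete, Riemannian manifold; by the Hopf--Rinow theorem every $g\in G^{\circ}$ is joined to $e$ by a minimizing geodesic, so $g=\exp_G(X)$ for some $X\in\frg$ with $|X|$ equal to the Riemannian distance $d(e,g)$. It therefore suffices to bound the diameter of $G^{\circ}$ from above uniformly, for which I would use the Bonnet--Myers theorem. This requires a positive lower bound on the Ricci curvature, and for a bi-invariant metric a direct computation (using $\ad_{\frg}$-skew-symmetry of $\langle\,,\rangle$) gives $\mathrm{Ric}(X,X)=-\tfrac14 B(X,X)$, where $B$ is the Killing form of $\frg$. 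Since $\frg$ is compact and semisimple, $B$ is negative definite, so $-\tfrac14 B$ is positive definite; letting $\kappa>0$ be its least eigenvalue with respect to $\langle\,,\rangle$, we get $\mathrm{Ric}(X,X)\ge\kappa\,\langle X,X\rangle$ for all $X$, whence $\mathrm{diam}(G^{\circ})\le\pi\sqrt{(\dim\frg-1)/\kappa}=:D$.

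The decisive point, and the only place demanding care, is that $D$ is uniform in $G$: the quantities $\langle\,,\rangle$, $B$, $\dim\frg$, and thus $\kappa$ and $D$, are computed entirely inside the Lie algebra and are independent of the global group, because curvature is a local invariant determined by the bracket and the inner product. Finally I would take any $\Delta>D$. Then for each $g\in G^{\circ}$ there is $X\in\frg$ with $\exp_G(X)=g$ and $|X|=d(e,g)\le D<\Delta$, so $g\in\exp_G(B_\varrho(0,\Delta))$; as the reverse inclusion is trivial, $\exp_G(B_\varrho(0,\Delta))=G^{\circ}$ for every $G$ with Lie algebra $\frg$, which completes the proof (and, incidentally, yields the compactness asserted in Theorem~\ref{t.lie_alg_decomposition}(4)).
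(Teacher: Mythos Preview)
Your proof is correct and follows essentially the same route as the paper's: equip $G^{\circ}$ with the bi-invariant metric coming from $\langle\,,\rangle$, identify Lie and Riemannian exponentials, and apply Myers' theorem to obtain a diameter bound depending only on $(\frg,\langle\,,\rangle)$. The only cosmetic difference is that the paper expresses the Ricci lower bound as $\tfrac14\|\ad(x)\|_{HS}^2$ while you write it as $-\tfrac14 B(x,x)$; these coincide because $\ad(x)$ is skew with respect to $\langle\,,\rangle$, so $B(x,x)=\operatorname{tr}(\ad(x)^2)=-\|\ad(x)\|_{HS}^2$.
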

	
	\begin{proof}
		We may assume without loss of generality that $G$ is connected. It is known (\cite[Prop. 3.39]{cheeger_ebin}) that $\langle\,,\rangle$ extends to a bi-invariant Riemannian metric on $G$, which turns $G$ into a Riemannian symmetric space, the curvature tensor of which satisfies the identity
		\[
		R(x,y)x=-\frac{1}{4}\ad(x)^2(y) \text{ for all }x,y\in \frg.
		\]
		If $x\in\frg$ is a unit vector and we extend it to an orthonormal basis $x=e_1,\dots,e_n$ of $\frg$, then the Ricci curvature of $G$ in the direction $x$ is expressed by
		\[
		Ric(x,x)=\sum_{i=1}^n\langle R(x,e_i)x,e_i\rangle=\frac{1}{4}\sum_{i=1}^n\|\ad(x)(e_i)\|^2=\frac{1}{4}\|\ad(x)\|_{HS}^2,
		\]
		where $\|.\|_{HS}$ stands for the Hilbert-Schmidt norm. As $\frg$ has trivial center, the Ricci curvature has a strictly positive minimum
		\[
		Ric_{\min}=\frac{1}{4}\min_{\|x\|=1}\|\ad(x)\|_{HS}^2>0.
		\]
		Thus, by Myers' theorem \cite[Thm. 1.26]{cheeger_ebin}, any two points of $G$ can be connected by a geodesic curve of length at most 
		\[
		\Delta=\pi\sqrt{\frac{n-1}{Ric_{\min}}}.
		\]
		As geodesic curves of $G$ starting from the identity element of $G$ coincide with one-parameter subgroups of $G$, the above $\Delta$ satisfies the condition. 
	\end{proof}
	
	The following lemma is simple.  We present a proof for completeness.
	\begin{lemma}\label{l.nonconjugate}
		Let $K$ be a compact subgroup of a Lie group $G$. Then $K$ is not isomorphic to any of its proper subgroups.
	\end{lemma}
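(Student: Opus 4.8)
The plan is to compare $K$ with a putative isomorphic copy by means of two invariants that any isomorphism of topological groups must preserve: the dimension and the number of connected components. So suppose, toward a contradiction, that $\varphi\colon K\to H$ is an isomorphism of topological groups onto a proper subgroup $H\lneq K$. The first thing I would establish is that $H$ is a Lie subgroup to which the structural results apply: since $K$ is compact and $\varphi$ is a homeomorphism, $H=\varphi(K)$ is compact, hence closed in $G$, and Cartan's theorem (Theorem~\ref{t.lie_group_fun_facts}(4)) makes $H$ a closed Lie subgroup of $K$.

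Next I would use the dimension. A topological group isomorphism is in particular a homeomorphism of the underlying manifolds, so $\dim H=\dim K$. Applying Theorem~\ref{t.lie_group_fun_facts}(1) to the inclusion $H\leq K$ of equidimensional Lie groups yields $H^{\circ}=K^{\circ}$; in particular $K^{\circ}\leq H\leq K$. I would then bring in the component count: a compact Lie group has finitely many connected components, and $\varphi$ carries the identity component to the identity component, so it induces an isomorphism $K/K^{\circ}\cong H/H^{\circ}$ of finite groups. Combining this with $H^{\circ}=K^{\circ}$ gives $[K:K^{\circ}]=[H:H^{\circ}]=[H:K^{\circ}]$. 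Multiplicativity of the index along the tower $K^{\circ}\leq H\leq K$ then forces $[K:H]=1$, i.e.\ $H=K$, contradicting the properness of $H$.

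The argument is short and each step is routine, so the only place demanding genuine care is the very first one. One must read ``isomorphic'' as isomorphic \emph{as topological groups}: this is precisely what guarantees that the image $H$ is compact, hence closed, so that $\dim H$, $H^{\circ}$, and $[H:H^{\circ}]$ are well defined and the Lie-theoretic tools apply. Were $H$ allowed to be merely an abstractly isomorphic---possibly dense, non-closed---copy of $K$, the dimension and component invariants would no longer be directly available, so this compactness/closedness reduction is the real hinge of the proof.
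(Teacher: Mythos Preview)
Your argument is correct. The key ingredients---Cartan's theorem to make $H$ a closed Lie subgroup, Theorem~\ref{t.lie_group_fun_facts}(1) to force $H^{\circ}=K^{\circ}$ once the dimensions agree, and finiteness of the component group to finish---are exactly the ones the paper relies on as well.

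The organization differs, though. The paper does not compare $K$ and $H$ directly; instead it first extracts from Theorem~\ref{t.lie_group_fun_facts}(1) the auxiliary statement that there is no infinite strictly decreasing chain $K_1\gneqq K_2\gneqq\cdots$ of compact Lie subgroups, and then observes that an isomorphism $\varphi\colon K\to\varphi(K)\lneqq K$ would produce such a chain $K\gneqq\varphi(K)\gneqq\varphi^{2}(K)\gneqq\cdots$. Your route is more economical: you exploit immediately that $\varphi$ is a homeomorphism, so $\dim H=\dim K$ holds from the outset and no iteration is needed. The paper's detour, on the other hand, yields a reusable descending-chain condition, which is a slightly stronger intermediate fact even if it is not used elsewhere in the paper.
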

	
	\begin{proof}
		It follows easily from Theorem~\ref{t.lie_group_fun_facts} (1) that there is no infinite strictly decreasing sequence $K_1\gneqq K_2\gneqq\ldots$ of compact Lie groups. If there were an isomorphism $\varphi\colon K\to \varphi(K)\lneqq K$, then $K\gneqq \varphi(K)\gneqq \varphi^2(K)\gneqq \varphi^3(K)\gneqq\ldots$ would be such an infinite strictly decreasing sequence.
	\end{proof}

	We will need the following theorem of Montgomery and Zippin \cite[Thm.  1]{Montgomery_Zippin}.
	
	\begin{theorem}\label{t.mz}
		Let $G$ be a Lie group and let $K$ be a compact subgroup of $G$. Then there exists an open set $U\supseteq K$ such that for every subgroup $H$ of $G$ that lies in $U$, there exists $g\in G$ such that $gHg^{-1}\leq K$.
		
		Moreover, for any neighborhood $O$ of the identity of $G$, there is an open set $U_O\supseteq K$ such that for every subgroup $H$ of $G$ that lies in $U_O$ there exists $g\in O$ such that $gHg^{-1}\leq K$.
	\end{theorem}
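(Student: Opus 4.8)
The plan is to build the neighborhood $U$ as a \emph{tube} (slice) around $K$ and to reduce the desired conclusion to the existence of a common fixed point, near the base coset, for the left action of $H$ on the homogeneous manifold $M=G/K$. First I would fix a relatively compact open tube and shrink all later neighborhoods inside it, so that any subgroup $H$ lying in $U$ has compact closure $\overline H$; this $\overline H$ is again a subgroup and, by Cartan's theorem (Theorem~\ref{t.lie_group_fun_facts}(4)), a compact Lie subgroup. Since conjugating $\overline H$ into $K$ automatically conjugates $H$ into $K$, I may assume $H$ is compact. Because $K$ is compact I choose an $\mathrm{Ad}(K)$-invariant inner product on $\mathfrak g$ and the resulting $\mathrm{Ad}(K)$-invariant splitting $\mathfrak g=\mathfrak k\oplus\mathfrak m$, where $\mathfrak k$ is the Lie algebra of $K$. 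The map $(X,k)\mapsto\exp_G(X)k$ is then a diffeomorphism from $B_{\mathfrak m}(0,\varepsilon)\times K$ onto a neighborhood of $K$ for small $\varepsilon$ (inverse function theorem together with compactness of $K$), so in this tube every element is uniquely $\exp_G(X)k$ with $\|X\|<\varepsilon$ and $k\in K$; correspondingly $X\mapsto\exp_G(X)K$ is a chart of $M=G/K$ (a manifold by Theorem~\ref{t.lie_group_fun_facts}(2)) about the base point $o=eK$, identifying a neighborhood of $o$ with $B_{\mathfrak m}(0,\varepsilon)$.

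Next I would reformulate the target inclusion geometrically. For $g$ near $e$, the condition $g^{-1}Hg\subseteq K$ is equivalent to $h\cdot gK=gK$ for all $h\in H$, i.e.\ to $gK\in M$ being a common fixed point of the left $H$-action, and $g$ can be taken near $e$ precisely when that fixed point lies near $o$. If $H$ lies in the tube, then each $h=\exp_G(X(h))k(h)$ gives $hK=\exp_G(X(h))K$, so the $H$-orbit of $o$ is the compact, $H$-invariant set $\Omega=\{X(h):h\in H\}\subseteq B_{\mathfrak m}(0,\varepsilon)$, which contains $o$ (as $X(e)=0$) and has diameter at most $2\varepsilon$. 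Thus the whole statement reduces to the following: a compact group $H$ acting on the manifold $M$ and preserving a small set $\Omega$ near $o$ must have a fixed point near $o$.

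To produce the fixed point I would use the Riemannian center of mass. Fixing a background metric $g_0$ near $o$ and averaging it gives the $H$-invariant metric $g_H=\int_H h^{\ast}g_0\,d\mu_H(h)$, turning the $H$-action into an action by isometries. Let $\nu$ be the push-forward of the Haar probability measure under $h\mapsto h\cdot o$; it is supported on $\Omega$ and is $H$-invariant. Provided $\Omega$ lies in a $g_H$-geodesic ball smaller than the convexity radius, the energy $x\mapsto\int d_{g_H}(x,\cdot)^2\,d\nu$ has a unique minimizer $c$, and uniqueness forces equivariance under any isometry preserving $\nu$; hence $c$ is $H$-fixed, and since $o\in\Omega$ it lies within $O(\varepsilon)$ of $o$. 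Pulling $c$ back to $X^{\ast}\in\mathfrak m$ and setting $g=\exp_G(X^{\ast})$ yields $g^{-1}Hg\subseteq K$ with $g$ as close to $e$ as desired by shrinking $\varepsilon$, which gives both the main assertion and the ``moreover'' refinement placing $g$ in a prescribed neighborhood $O$.

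The main obstacle is \emph{uniformity}: the geometric constants of $g_H$ (convexity radius, and the curvature and injectivity bounds entering the center-of-mass theorem) must be controlled simultaneously for \emph{all} subgroups $H\subseteq U$, a large and otherwise uncontrolled family. I would resolve this by observing that every $h$ ranges over the fixed relatively compact set $\overline U$, so the pullbacks $h^{\ast}g_0$ satisfy uniform two-sided and $C^2$ bounds; therefore $g_H$ does as well, and its convexity radius near $o$ is bounded below by some $r_0>0$ independent of $H$. Choosing $\varepsilon$ small enough that $\operatorname{diam}\Omega<r_0$ makes the construction applicable uniformly over the family, completing the argument.
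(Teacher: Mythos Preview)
The paper does not give its own proof of this statement: Theorem~\ref{t.mz} is quoted from Montgomery and Zippin \cite[Thm.~1 and Cor.\ on p.~451]{Montgomery_Zippin} and used as a black box in the proofs of Theorem~\ref{t.isolated_characterized} and Theorem~\ref{t.open_map}. So there is nothing in the paper to compare your argument against line by line.

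That said, your outline is a correct and standard route to the Montgomery--Zippin theorem. The reformulation ``$gHg^{-1}\subseteq K$ iff $gK$ is a common fixed point of the left $H$-action on $G/K$'' is the heart of the matter, and producing that fixed point by a center-of-mass (Karcher/Cartan) argument with respect to an $H$-invariant metric is exactly how modern expositions proceed. You have also identified the genuine difficulty, namely that the convexity/injectivity radius of the averaged metric $g_H$ must be bounded below \emph{uniformly in $H$}, and your resolution---freeze a first relatively compact tube $U_1$, note that all left translations $L_h$ with $h\in\overline{U_1}$ have uniform $C^2$ bounds on a fixed neighborhood of $o$, hence so do the averaged metrics $g_H$, and only then pick the final $\varepsilon$ small compared with the resulting uniform convexity radius---is the right order of quantifiers and avoids the obvious circularity.

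Two points are worth tightening if you write this out in full. First, to make $g_H=\int_H h^{\ast}g_0\,d\mu_H$ well defined you need $g_0$ on a set containing $L_h$ of your working neighborhood for every $h\in\overline{U_1}$; the cleanest fix is simply to take $g_0$ to be a global Riemannian metric on the manifold $G/K$. Second, the step ``uniform $C^2$ bounds on $g_H$ imply a uniform lower bound on the convexity radius at $o$'' is true but is not a one-liner; it requires either a compactness argument in the space of metrics or an explicit estimate via the geodesic equation in the fixed chart. With those details supplied, your proof goes through and yields both assertions of the theorem, including the refinement that $g$ can be chosen in any prescribed neighborhood $O$ of the identity.
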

	
	The second assertion is stated in \cite[Cor. on p. 451]{Montgomery_Zippin}.
	The following is an immediate corollary of a theorem of A.~M.~Turing, \cite[Thm. 2]{Turing}.
	
	\begin{theorem}\label{t.turing}
		Let $G$ be a connected Lie group with a compatible left invariant metric $d$. If $G$ can be approximated by finite subgroups in the sense that for every $\eps>0$ there is a finite subgroup $F\leq G$ that is an $\eps$-net in $G$, then $G$ is compact and abelian.
	\end{theorem}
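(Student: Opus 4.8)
The conclusion we want is precisely that of Turing's theorem \cite[Thm.~2]{Turing}, so the plan is to reduce to it, treating Turing's result as a black box and spending our effort on verifying that its hypotheses are met; for extra safety I would also read off compactness by a direct argument. I would begin with compactness. Left-invariance of $d$ means that every ball of radius $\eps$ is a left translate of $B\defeq B(e,\eps)$, so the assumption that a finite subgroup $F$ is an $\eps$-net reads $G=\bigcup_{x\in F}xB=F\cdot B$. Since the balls $B(e,\eps)$ form a neighborhood base at the identity and each is symmetric (by left-invariance $d(e,x^{-1})=d(e,x)$), this says that every neighborhood of the identity has finitely many translates covering $G$; that is, $G$ is a precompact topological group, and hence embeds as a dense subgroup of a compact group $\widehat G$. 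Being a Lie group, $G$ is locally compact, and a locally compact subgroup of a Hausdorff topological group is closed, so density forces $G=\widehat G$. Thus $G$ is compact.

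It remains to establish commutativity, which is the genuine content and which I would obtain from Turing rather than reprove. Here the only thing to check is that our hypothesis --- the existence, for every $\eps>0$, of a \emph{finite subgroup} that is an $\eps$-net --- is at least as strong as Turing's notion of approximability by finite groups. This I expect to be routine: a genuine finite subgroup is an exact (not merely approximate) finite model of $G$ at scale $\eps$, and letting $\eps\to 0$ produces exactly the approximating data Turing requires, so \cite[Thm.~2]{Turing} applies and yields that the connected group $G$ is abelian.

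The main obstacle is therefore not analytic but definitional: one must pin down the precise approximation condition under which Turing states his theorem and confirm that ``exact finite subgroups that are $\eps$-nets for every $\eps$'' satisfies it, while also checking that connectedness of $G$ is among Turing's standing hypotheses --- this is essential, as the footnote in the introduction shows that the abelian conclusion fails for disconnected groups such as $\circle\times F$ with $F$ a non-commutative finite group (which is still compact and still approximable by its finite subgroups $(\integer/n)\times F$). Conceptually, the commutativity rests on Jordan's theorem (finite subgroups of a compact linear group have abelian subgroups of uniformly bounded index), which prevents finite subgroups from becoming arbitrarily fine nets along a non-abelian direction; packaging this obstruction is exactly what Turing's argument does, so I would cite it rather than unwind it.
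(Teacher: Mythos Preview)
Your proposal is correct and matches the paper's approach: the paper gives no proof at all, merely recording the statement as an immediate corollary of Turing's \cite[Thm.~2]{Turing}. Your separate compactness argument is sound (and can even be shortened---since a Lie group is locally compact, choose $\eps$ so that $\overline{B(e,\eps)}$ is compact, whence $G=F\cdot B(e,\eps)$ is a finite union of compacts), but it already goes beyond what the paper provides.
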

	
	As we have already mentioned, Turing formulates his theorem without the assumption of connectedness, but this condition \emph{cannot be omitted}.
	
	\section{Subgroups isolated up to conjugacy}
	
	\begin{defi}
		Let $K$ be a compact subgroup of a topological group $G$. Then the point $K\in\cals(G)$ is \emph{isolated up to conjugacy} if there exists an open neighborhood $\calu\subseteq\cals(G)$ of $K$ such that for every $H\in\calu$, there is some $g\in G$ such that $H=gKg^{-1}$.
	\end{defi}
	
	\begin{lemma}\label{l.Torus_semi_finite}
		If the identity component of a compact Lie group $K$ is a torus, then $K$ factors onto a semidirect product of a torus and a finite group.
	\end{lemma}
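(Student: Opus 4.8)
The plan is to exploit the structure of $K$ as an extension of a finite group by a torus, and to show that although this extension may fail to split, a suitable finite quotient of it does. Write $T\defeq K^\circ$, which by hypothesis is a torus, say of dimension $n$; it is a closed normal subgroup of $K$, and since $K$ is a compact Lie group the quotient $F\defeq K/T$ is finite. As $T$ is abelian, the conjugation action of $K$ on $T$ descends to a homomorphism $\alpha\colon F\to\textrm{Aut}(T)$, and choosing a set-theoretic lift $s\colon F\to K$ of the quotient map produces a $2$-cocycle $c(x,y)\defeq s(x)s(y)s(xy)^{-1}\in T$ recording the extension class $[c]\in H^2(F,T)$ (continuity is automatic because $F$ is discrete). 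If $[c]=0$ then $K$ is already isomorphic to $T\rtimes_{\alpha}F$ and we are done; the substance of the lemma is to handle the general case.

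Let $q\defeq|F|$ and let $Z\defeq T[q]=\{t\in T: t^{q}=e\}$ be the subgroup of $q$-torsion points. First I would record that $Z$ is a finite characteristic subgroup of $T$, hence normal in $K$, and that the multiplication-by-$q$ endomorphism $\mu_q\colon T\to T$ is a continuous surjection with kernel exactly $Z$, so that $T/Z\cong T$ is again an $n$-dimensional torus. Consequently $\bar K\defeq K/Z$ is a compact Lie group fitting into an extension $1\to T/Z\to\bar K\to F\to 1$, and the quotient homomorphism $K\twoheadrightarrow\bar K$ is the candidate factoring onto a semidirect product.

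The heart of the argument is that this new extension splits. Lifting $s$ shows that the extension class of $\bar K$, valued in $T/Z$, is represented by $c$ reduced modulo $Z$; identifying $T/Z\cong T$ via $\mu_q$ turns this into $\mu_q\circ c$, which represents $(\mu_q)_*[c]=q\,[c]$. Since $H^2(F,T)$ is annihilated by $q=|F|$ — the standard norm computation, where summing the cocycle identity over the last variable exhibits $c^{\,q}$ as the coboundary of the $1$-cochain $b(x)\defeq\prod_{y\in F}c(x,y)$ — the class $q\,[c]$ vanishes. Hence $\bar K$ splits as a semidirect product $(T/Z)\rtimes_{\bar\alpha}F$, where $\bar\alpha$ is the action of $F$ on $T/Z$ induced by $\alpha$. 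As $T/Z$ is a torus and $F$ is finite, the quotient homomorphism exhibits $K$ factoring onto the semidirect product $(T/Z)\rtimes F$, as desired.

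I expect the main obstacle to be precisely that the extension $1\to T\to K\to F\to 1$ need not split, so one cannot take $K$ itself; the essential input is the annihilation of $H^2(F,T)$ by $|F|$, which forces the class to die after dividing out the $|F|$-torsion of $T$. A secondary point requiring care is the bookkeeping that identifies the extension class of $K/Z$ with $q\,[c]$ under $T/Z\cong T$, together with checking that all maps involved (the quotient $K\to\bar K$, the homomorphic section $F\to\bar K$, and the resulting isomorphism) are continuous — this is immediate here since $F$ is discrete and all the groups in sight are compact Lie groups, so any algebraic isomorphism is automatically a homeomorphism.
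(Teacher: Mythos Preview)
Your argument is correct and follows essentially the same route as the paper: both set up the extension $1\to T\to K\to F\to 1$, invoke the vanishing of $|F|\cdot[c]$ in $H^2(F,T)$, and pass to the quotient by the $|F|$-torsion subgroup $T_m=Z$ of $T$ to obtain a split extension. The only cosmetic difference is that the paper uses the divisibility of $T$ to explicitly adjust the section so that the cocycle lands in $T_m$ (thereby exhibiting a finite subgroup $F'\leq K$ mapping onto $F$ in $K/T_m$), whereas you phrase the same step via the functoriality of $H^2$ under the isomorphism $\mu_q\colon T/Z\xrightarrow{\sim}T$.
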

	\begin{proof}
		We shall use the well-known connection between the extensions of a group with an abelian kernel and the second cohomology of the group (see Brown \cite[Sec. IV.3]{Brown} for details.) By our assumption, there is a short exact sequence
		\[
		0\to T\to K\stackrel{\varphi}{\to} F\to 0,
		\]
		where $T=K^\circ$ is a torus, and $F$ is a finite group. The group 
		$K$ acts on $T$ by conjugation, and the action of $T$ on itself is trivial, so this actions factors to an action $\alpha\colon F\cong K/T\to \mathrm{Aut}(T)$ of $F$ on $T$, hence $T$ is a $K$-module. Following \cite{Brown}, we write the group operation on $T$ additively, and for $k\in K$ and $x\in T$, $\alpha(k)\big(x\big)$ will be denoted shortly by $kx$.
		
		The group $K$ is isomorphic to the semidirect product $T\rtimes_{\alpha}F$ if and only if there is a homomorphism $s\colon F\to K$ such that $\varphi\circ s=\textrm{id}_F$.
		Choosing an arbitrary map $s\colon F\to K$ such that $\varphi\circ s=\textrm{id}_F$ and $s(e_F)=e_K$, the failure of $s$ to be a homomorphism is measured by the $T$-valued normalized $2$-cocycle
		\[
		f_s\colon F\times F\to T,\qquad f_s(g,h)=s(g)s(h)s(gh)^{-1}.
		\]
		If $s'=as$, where $a\colon F\to T$ is an arbitrary map satisfying $a(e_F)=e_K=0_T$, then $f_{s'}$ differs from $f_s$ by the coboundary $\delta a(g,h)=a(g)+g a(h)-a(gh)$ of $a$, that is $f_{s'}(g,h)=f_s(g,h)+\delta a(g,h)$. Thus, the obstruction of $K$ to be isomorphic to the semidirect product $T\rtimes_{\alpha}F$ is the cohomology class $[f_s]\in H^2(F;T)$, not depending on $s$.
		
		Let $m=|F|$ be the cardinality of $F$. By \cite[Ch. III. Cor. 10.2]{Brown}, even if $[f_s]\neq 0$, we have $[mf_s]= 0$. This means that there is a map $a'\colon F\to T$ such that $a'(e_F)=0_T$ and $mf_s+\delta a'\equiv 0_T$. As $T$ is a divisible abelian group, there is a map $a\colon F\to T$ such that $a(e_F)=0_t$ and $ma \equiv a'$. Then setting $s'=as$, the cocycle $f_{s'}$ will take its values in the subgroup $T_m=\{x\in T\mid m x=0_T\}$. This implies that $s'(F)\cup T_m$ generates a finite subgroup $F'$ of $K$.
		
		It is clear that $TF'=K$ and $T\cap F'=T_m$. Factoring $K$ by $T_m$ we obtain a compact group with identity component equal to the torus $T/T_m$, hence $K/T_m$ is isomorphic to the semidirect product of $T/T_m$ and the finite group $F'/T_m\cong F$. 
	\end{proof}
	
	\begin{theorem}\label{t.isolated_characterized}
		Let $G$ be a Lie group with a compatible metric $d$. For a compact subgroup $K\leq G$ the following are equivalent:
		
		(1) The identity component $K^{\circ}$ of $K$ is not perfect, i.e., $K^\circ\neq (K^\circ)'$.
		
		(2) The subgroup $K$ has a quotient of the form
		$$\circle^m\rtimes F,$$
		where $m>0$,  $\circle^m$ is an $m$-dimensional torus and $F$ is a finite group.
		
		(3) There is a sequence of closed proper subgroups $K_n\lneqq K$ such that $K_n\overset{d_H}{\to}K$.
		
		(4) The point $K\in\cals(G)$ is not isolated up to conjugacy.
		
		(5) The conjugacy  set $\{gKg^{-1}\mid g\in G\}$ of $K$ is not open in $\cals(G)$.
	\end{theorem}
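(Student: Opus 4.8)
The plan is to establish the cycle $(1)\Rightarrow(2)\Rightarrow(3)\Rightarrow(4)$, the equivalence $(4)\Leftrightarrow(5)$, and the contrapositive implication $\lnot(1)\Rightarrow\lnot(4)$; together these make all five conditions equivalent. The implications moving ``down the list'' are constructive and use only the structure theory recalled above, whereas closing the loop with $\lnot(1)\Rightarrow\lnot(4)$ is the analytic heart of the argument and relies on the theorems of Montgomery--Zippin and Turing.

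For $(1)\Rightarrow(2)$ I would argue on the Lie algebra $\mathfrak{k}$ of $K^\circ$. Since $K^\circ\neq(K^\circ)'$, the reductive decomposition $\mathfrak{k}=[\mathfrak{k},\mathfrak{k}]\oplus\real^l$ of Theorem \ref{t.lie_alg_decomposition}(2) has $l>0$. The commutator subgroup $(K^\circ)'$ is characteristic in $K^\circ$, hence normal in $K$, and is compact connected with Lie algebra $[\mathfrak{k},\mathfrak{k}]$, so $K^\circ/(K^\circ)'$ is a torus of dimension $l$. Thus the identity component of $\bar K:=K/(K^\circ)'$ is a positive-dimensional torus, and Lemma \ref{l.Torus_semi_finite} produces a quotient $\circle^m\rtimes F$ of $\bar K$; as its torus is a finite quotient of $\circle^l$, one has $m=l>0$. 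Composing $K\twoheadrightarrow\bar K\twoheadrightarrow\circle^m\rtimes F$ yields (2). For $(2)\Rightarrow(3)$, fix a quotient map $q\colon K\twoheadrightarrow Q=\circle^m\rtimes F$. The group $F$ preserves the $n!$-torsion subgroup $\circle^m[n!]$, so $Q_n:=\circle^m[n!]\rtimes F$ is a finite, hence proper, subgroup of $Q$ with $Q_n\to Q$. Setting $K_n:=q^{-1}(Q_n)$ gives closed proper subgroups of $K$, and $K_n\to K$ because $q$ is an open quotient map with compact kernel (Proposition \ref{p.quotient_map}), so that the Hausdorff distance of the saturated sets $q^{-1}(Q_n)$ and $q^{-1}(Q)$ is governed by that of $Q_n$ and $Q$ in the quotient metric.

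The step $(3)\Rightarrow(4)$ is immediate: a proper subgroup $K_n\lneqq K$ is not isomorphic to $K$ by Lemma \ref{l.nonconjugate}, hence not conjugate to $K$, so the convergence $K_n\to K$ exhibits non-conjugate subgroups in every neighborhood of $K$. For $(4)\Leftrightarrow(5)$ I would use that conjugation acts on $\cals(G)$ by homeomorphisms (Lemma \ref{l.right_multip_cont}) and fixes the conjugacy set $C=\{gKg^{-1}\mid g\in G\}$ setwise: if $C$ is open it is a neighborhood of $K$ consisting of conjugates, so $K$ is isolated up to conjugacy; and if some neighborhood $\calu\subseteq C$ witnesses isolation, then $g\calu g^{-1}\subseteq C$ is a neighborhood of $gKg^{-1}$, whence $C$ is open.

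The main obstacle is $\lnot(1)\Rightarrow\lnot(4)$: assuming $K^\circ$ perfect — equivalently, that its Lie algebra is semisimple — I must show $K$ is isolated up to conjugacy. The second assertion of Theorem \ref{t.mz} gives, for each neighborhood $O$ of the identity, a neighborhood of $K$ in $\cals(G)$ whose members $L$ satisfy $gLg^{-1}\le K$ for some $g\in O$; letting $O$ and the neighborhood of $K$ shrink, the subgroup $gLg^{-1}$ is a closed subgroup of $K$ lying Hausdorff-close to $K$. It therefore suffices to prove the intrinsic fact that a compact group with perfect identity component is not approximable by its proper closed subgroups. Because $K^\circ$ is open in $K$, any approximating subgroup must surject onto the finite group $K/K^\circ$ and meet $K^\circ$ in a proper subgroup converging to $K^\circ$, which reduces the claim to the case of a compact connected semisimple group $S$. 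Here full-dimensional proper subgroups are excluded by Theorem \ref{t.lie_group_fun_facts}(1), so approximants have dimension $<\dim S$, and the crux is that — unlike in a torus — such subgroups cannot wind densely: by Theorem \ref{t.exp_surj} the semisimple part of a closed connected subgroup is the exponential image of a ball of uniform radius and so stays bounded, its central torus part is confined to a maximal torus, and Turing's Theorem \ref{t.turing} forbids finite subgroups from becoming dense. Turning these three controls into a uniform lower bound on $d_H(A,S)$ over proper closed subgroups $A$ — extracting a limiting subalgebra by compactness of the Grassmannian and excluding the winding phenomenon — is the delicate point, which I would handle by induction on $\dim S$.
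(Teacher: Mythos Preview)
Your treatment of $(1)\Rightarrow(2)\Rightarrow(3)\Rightarrow(4)$ and $(4)\Leftrightarrow(5)$ is correct and essentially identical to the paper's. Your reduction of $\lnot(1)\Rightarrow\lnot(4)$ via Montgomery--Zippin to the statement ``a compact group with perfect identity component cannot be approximated by proper closed subgroups'', and the further reduction to the connected semisimple case by intersecting with $K^\circ$, are also fine and match the paper.

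The gap is in the connected semisimple case. Your ``three controls'' sketch does not work as stated, and the missing idea is precisely the one the paper supplies: after passing to a subsequence with $\dim K_n=q<r$ constant and extracting a limit $\mathfrak h\in Gr_q(\mathfrak k)$ of the Lie algebras $\mathfrak k_n$, one must show that $\mathfrak h$ is an \emph{ideal} of $\mathfrak k$. This is proved using the convergence $K_n\to K$ (so $\Ad_{k_n}\mathfrak k_n\subset\mathfrak k_n$ with $k_n\to k$ forces $\Ad_k\mathfrak h\subset\mathfrak h$ for every $k\in K$, hence $[\mathfrak k,\mathfrak h]\subset\mathfrak h$). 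The ideal property is what makes the rest go through: it forces $\mathfrak h$ to be a sum of simple summands of $\mathfrak k$, hence semisimple with trivial center, which gives the uniform positive lower bound on $Ric_{\min}^{\mathfrak k_n}$ needed for Theorem~\ref{t.exp_surj} to produce a \emph{single} radius $\Delta$ with $(K_n)^\circ=\exp_K(B(0,\Delta)\cap\mathfrak k_n)\to H:=\exp_K(B(0,\Delta)\cap\mathfrak h)$; and it makes $H$ normal in $K^\circ$, so that one can form the quotient $K^\circ/H$ and observe that the finite groups $(K_n\cap K^\circ)/H$ converge to it, whereupon Turing's theorem yields the contradiction.

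Without the ideal observation your argument breaks at two places. First, ``the semisimple part of a closed connected subgroup is the exponential image of a ball of uniform radius'' is false in general: the $\Delta$ of Theorem~\ref{t.exp_surj} depends on $Ric_{\min}$ of the subalgebra, and a sequence of semisimple subalgebras can degenerate toward a subalgebra with nontrivial center (so $Ric_{\min}\to 0$ and $\Delta\to\infty$); only knowing that the \emph{limit} $\mathfrak h$ is semisimple prevents this. Second, ``the central torus part is confined to a maximal torus'' gives no control, since the union of maximal tori is all of $S$, and your proposed induction on $\dim S$ has no evident inductive step (the approximating subgroups need not respect any fixed proper normal subgroup to quotient by). Replace the three-controls heuristic with the ideal argument above and the proof closes exactly as in the paper.
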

	
	\begin{remark}
		The proof elaborates on an answer of Nicolas Tholozan on MathOverflow \cite{Tholozan}.
	\end{remark}
	\begin{proof}  (4)$\iff$(5): It is immediate from Lemma \ref{l.right_multip_cont}.
		
		(1)$\implies$(2): First observe that $(K^\circ)'$ is a characteristic subgroup of $K^\circ$, therefore it is normal in $K$. The quotient group $K/(K^\circ)'$ is a compact group with identity component $K^\circ/(K^\circ)'$. Here $K^\circ/(K^\circ)'$ is a non-trivial connected compact abelian Lie group, hence a torus of dimension at least $1$, so we are done by Lemma \ref{l.Torus_semi_finite}.
		
		(2)$\implies$(3): Let $\eps>0$. Fix a quotient map $\varphi\colon K\to\circle^m\rtimes F$, and let $K_n\defeq\varphi^{-1}((C_n)^m\rtimes F)$, where $C_n$ is the cyclic group of order $n$. (It is easy to see that $(C_n)^m\rtimes F$ is a subgroup of $\circle^m\rtimes F$.) Then the subgroups $K_n$ are closed proper subgroups of $K$. Let $\bigcup_{i=1}^k U_i$ be a cover of $K$ with open sets of diameter $<\eps$. Since every surjective continuous homomorphism between compact groups is an open map, the system $\bigcup_{i=1}^k \varphi(U_i)$ is an open cover of $\circle^m\rtimes F$. Clearly, for sufficiently large $n$, the subgroup $(C_n)^m\rtimes F$ meets every $\varphi(U_i)$, hence $K_n$ meets every $U_i$. Then $K_n$ is an $\eps$-net in $K$ and we have $d_H(K_n,K)\leq\eps$.
		
		(3)$\implies$(4): It follows immediately from Lemma~\ref{l.nonconjugate}.
		
		(4)$\implies$(3): Let $(H_n)_{n\in\nat}$ be a sequence of compact subgroups of $G$ such that $H_n\hconv K$ and none of the $H_n$ is conjugate to $K$.
		
		By Theorem~\ref{t.mz} there are open sets $U_i$ with $K\subseteq U_i\subseteq G$ for every $i\in\nat$ such that for every subgroup $H$ of $G$ that lies in $U_i$ there exists $g_i\in B{\left(e_G,\frac{1}{2^i}\right)}$ such that $g_iHg_i^{-1}\leq K$. Let $(H_{n_i})$ be a subsequence of $H_n$ with $H_{n_i}\subseteq U_i$ for every $i\in\nat$. Let $g_i\in B{\left(e_G,\frac{1}{2^i}\right)}$ be such that $K_i\defeq g_iH_{n_i}g_i^{-1}\leq K$. 
		Then applying Lemma~\ref{l.right_multip_cont}, we get $K_i\hconv K$.
		
		(3)$\implies$(1): Let $(K_n)_{n\in\nat}$ be a sequence of closed proper subgroups of $K$ such that $K_n\hconv K$. Let $r\defeq\dim(K)$. Let $\frk_n$ be the Lie algebra of $K_n$ and let $\frk$ be the Lie algebra of $K$. We may assume that there is some $q\leq r$ such that $\dim(K_n)=q$ for each $n\in\nat$. If $q=r$, then it follows from Theorem~\ref{t.lie_group_fun_facts} (1) that $(K_n)^\circ=K^\circ$ for every $n$. Then each $K_n$ is a union of cosets of $K^\circ$. Since each of the groups $K_n$ is a proper subgroup of $K$, they cannot converge to $K$. Thus $q<r$.
		
		We can view the Lie algebras $\frk_n$ as elements of the Grassmannian manifold $Gr_q(\frk)$, which is compact and metrizable. 
		
		By compactness, we may assume that $\frk_n\to\frh$ for some $\frh\in Gr_q(\frk)$. 
		
		\textbf{Claim 1.} The linear subspace $\frh$ of $\frk$ is an ideal of $\frk$.
		
		For a $k\in K$ let $\Ad_k\colon \frk\to\frk$ be the adjoint operator of $k$ and let $\ad\colon \frk\to \End(\frk)$ be the adjoint representation of $\frk$.
		
		Let $k\in K$ and $x\in\frh$ be arbitrary. Pick $x_n\in\frk_n$ and $k_n\in K_n$ such that $x_n\to x$ in $\frk$ and $k_n\to k$ in $K$. By definition $\Ad_{k_n}(x_n)\in\frk_n$. Also $\Ad_{k_n}(x_n)\to\Ad_k(x)$ since $\Ad$ is continuous. It follows that $\Ad_k(x)\in\frh$.
		
		Now fix $x\in\frh$ and $y\in\frk$ arbitrarily. We need to prove that $[y,x]\in\frh$. We know that $[y,x]=\ad_y(x)$. Notice that
		$$\ad_y(x)=\tfrac{d}{dt}{\left(e^{\ad(ty)}(x)\right)}\big\vert_{t=0}$$
		and
		$$e^{\ad(ty)}(x)=\Ad_{\exp_K(ty)}(x),$$
		where $\exp_K\colon \frk\to K$ is the exponential map. As we have already verified, the curve $t\mapsto\Ad_{\exp_K(ty)}(x)$ maps into $\frh$, hence its derivative at $0$ is an element of $\frh$ as well. Therefore, we conclude $[y,x]\in\frh$, which proves Claim 1. \hfill $\blacksquare$
		
		By Theorem~\ref{t.lie_group_fun_facts} (3), there exists a unique connected, simply connected Lie group $\wtilde{H}$ with Lie algebra $\frh$, and the inclusion $\psi\colon \frh\embeds\frk$ gives rise to an immersion $\wtilde{\varphi}\colon\wtilde{H}\to K$ that is a group homomorphism as well. Then $\kernel\wtilde\varphi$ is a discrete Lie group and the quotient map $\wtilde H\to \wtilde H/\kernel\wtilde\varphi$ induces an embedding $\varphi\colon  \wtilde H/\kernel\wtilde\varphi\embeds 
		K$. Thus $H\defeq\varphi(\wtilde H/\kernel\wtilde\varphi)$ is a connected subgroup of $K$ with Lie algebra $\frh$. It follows from Theorem~\ref{t.normal_ideal} and Claim 1 that $H$ is a normal subgroup of $K^\circ$.
		
		We prove $(K^\circ)'\neq K^\circ$ by  seeking a contradiction from the assumption $(K^\circ)'= K^\circ$. By Theorem~\ref{t.lie_alg_decomposition} (2), if $K^\circ=(K^\circ)'$, then $\frk$ is a compact semisimple Lie algebra, therefore it is the direct sum $\fra_1\oplus\ldots\oplus\fra_k$ of compact simple Lie algebras $\fra_i$ and $\frh\nsub \frk$ is the direct sum of some of the ideals $\fra_i$. In particular, $\frh$ is compact and semisimple, implying that  both $\wtilde H$ and $H$ are compact Lie groups. 
		
		\textbf{Claim 2.} For the identity components of the groups $K_n$ we have $(K_n)^\circ\hconv H$.
		
		We are going to apply Theorem~\ref{t.exp_surj}. Fix an $\ad_{\frk}$-invariant inner product $\langle\, ,\rangle$ on $\frk$, and let $\varrho$ be the corresponding Euclidean metric on $\frk$. For a subalgebra $\frs\le\frk$, equip $\frs$ with the $\ad_{\frs}$-invariant inner product obtained as the restriction of $\langle\, ,\rangle$ onto $\frs\times\frs$. Choose for each $\frk_n$ a unit vector $x_n\in \frk_n$ such that  
		\[
		Ric_{\min}^{\frk_n}=\frac{1}{4}\min_{{x\in\frk_n\atop\|x\|=1}}\|\ad_{\frk_n}(x)\|_{HS}^2=\frac{1}{4}\|\ad_{\frk_n}(x_n)\|_{HS}^2.
		\]
		We must have $\liminf_{n\to\infty}Ric_{\min}^{\frk_n}>0$, otherwise we could find a subsequence $x_{n_1},x_{n_2},\dots$ converging to a unit vector $x\in \frh$ such that
		\[
		0=\lim_{i\to\infty} \frac{1}{4}\|\ad_{\frk_{n_i}}(x_{n_i})\|_{HS}^2=\frac{1}{4}\|\ad_{\frh}(x)\|_{HS}^2,
		\]
		which would contradict that $\frh$ has trivial center, therefore $\ad_{\frh}(x)\neq 0$.
		
		Thus, Theorem~\ref{t.exp_surj} gives us some $\Delta>0$ such that  $\exp_K(B_\varrho(0,\Delta)\cap\frh)=H^\circ=H$ and $\exp_K(B_\varrho(0,\Delta)\cap\frk_n)=(K_n)^\circ$ for all but a finite number of $n$. Since we have $\cl B_\varrho(0,\Delta)\cap\frk_n\to \cl B_\varrho(0,\Delta)\cap\frh$ (with respect to the Vietoris topology on $\calk(\frk)$) and $\exp_K$ is uniformly continuous on $\frk\cap \cl B_\varrho(0,\Delta)$, we conclude that $(K_n)^\circ\hconv H$, and Claim 2 is proved. \hfill$\blacksquare$
		
		As we have seen in the proof of (4)$\implies$(3), by replacing $(K_n)^\circ$ with a subsequence if necessary we can find elements $k_n\in K^\circ$ such that $\wtilde K_n\defeq k_n(K_n)^\circ k_n^{-1}\leq H$ and $\wtilde K_n\hconv~H$. Recall that $\dim(\wtilde K_n)=\dim(H)$. Since both $\wtilde K_n$ and $H$ are connected, it follows from Theorem~\ref{t.lie_group_fun_facts} (1) that $\wtilde K_n=H$ for every $n\in\nat$. Then $(K_n)^\circ=k_n^{-1}Hk_n=H$ since $H\nsub K^\circ$. Thus $H$ is normal in $K_n\cap K^\circ$ for every $n$.
		
		It is easy to see that $K_n\hconv K$ and $K_n\leq K$ implies $K_n\cap K^{\circ}\hconv K^{\circ}$. Therefore, we have $(K_n\cap K^{\circ})/H\to K^\circ/H$ with respect to the Vietoris topology on $K^\circ/H$. Here every quotient $(K_n\cap K^{\circ})/H$ is finite because $\dim(K_n\cap K^{\circ})=\dim(K_n)=\dim(H)$. Now by Theorem ~\ref{t.turing}, the quotient $K^{\circ}/H$ is abelian. Since $\dim(K)\neq\dim(H)$, it is also non-trivial. Thus $K^{\circ}\neq(K^{\circ})'$, a contradiction.
		
		The proof of Theorem~\ref{t.isolated_characterized} is complete.
	\end{proof}
	
	It is worth stating the special case of compact connected Lie groups on its own:
	
	\begin{cor}\label{c.connected}
		Let $G$ be a Lie group. A compact connected subgroup $K\in\cals(G)$ is isolated up to conjugacy if and only if $K=K'$. (Or equivalently if $K$ does not have a non-trivial abelian quotient.)
	\end{cor}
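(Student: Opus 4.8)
The plan is to read the corollary directly off Theorem~\ref{t.isolated_characterized} by specializing to the case where $K$ is connected. When $K$ is connected we have $K^\circ=K$, so condition~(1) of the theorem, namely $K^\circ\neq(K^\circ)'$, simplifies to the statement $K\neq K'$. The theorem asserts the equivalence (1)$\iff$(4), where (4) says precisely that $K$ is \emph{not} isolated up to conjugacy. Negating both sides of this equivalence yields that $K$ \emph{is} isolated up to conjugacy if and only if $K=K'$, which is the first assertion of the corollary.

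It then remains to justify the parenthetical reformulation, namely that $K=K'$ holds if and only if $K$ has no non-trivial abelian quotient. First I would recall that, by Theorem~\ref{t.lie_alg_decomposition}~(2), the commutator subgroup $K'$ of the compact connected Lie group $K$ is a compact connected, in particular closed, Lie subgroup, so $K/K'$ is a well-defined compact abelian Lie group. If $K=K'$, then for any closed normal $N\nsub K$ with $K/N$ abelian we must have $K'\leq N$, whence $N=K$ and the quotient is trivial. Conversely, if $K\neq K'$, then $K/K'$ is itself a non-trivial abelian quotient. This establishes the stated equivalence.

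I do not anticipate any genuine obstacle: the corollary is a formal specialization of the already-proven theorem. The only point requiring a little care is the reformulation step, where one must invoke the closedness of $K'$ (guaranteed by Theorem~\ref{t.lie_alg_decomposition}~(2)) to ensure that $K/K'$ is an honest Hausdorff quotient group and hence a legitimate quotient; for a general topological group the commutator subgroup need not be closed, but in the compact connected Lie setting this is automatic, so the argument goes through without difficulty.
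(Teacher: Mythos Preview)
Your proposal is correct and matches the paper's approach exactly: the paper presents this as an immediate specialization of Theorem~\ref{t.isolated_characterized} and gives no separate proof. Your additional justification of the parenthetical reformulation (that $K=K'$ is equivalent to having no non-trivial abelian quotient, using the closedness of $K'$ from Theorem~\ref{t.lie_alg_decomposition}~(2)) is a nice explicit touch that the paper leaves implicit.
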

	
	We finish this section by a characterization of the minimal quasi-isomorphism classes of compact Lie groups that are not isolated up to conjugacy in Lie groups, where minimality is taken with respect to the quasi-order $\succeq$ defined in the introduction. First we note that isomorphic compact Lie groups are obviously quasi-isomorphic, but the converse is false.
	\begin{example}
		Identify $\circle^2$ with $\mathbb R^2/\mathbb Z^2$ and consider the actions $\alpha$ and $\beta$ of $\mathbb Z_2=\{\pm 1\}$ on $\circle^2$ given by the involutions
		\[
		\alpha_{-1}\colon[(x,y)]\mapsto[(x,-y)]\text{ and }\beta_{-1}\colon[(x,y)]\mapsto[(x+y,-y)]. 
		\]
		The Lie groups $G_{\alpha}=\circle^2\rtimes_{\alpha}\mathbb Z_2$ and $G_{\beta}=\circle^2\rtimes_{\beta}\mathbb Z_2$ are not isomorphic, as the center of $G_{\alpha}$ is a union of two disjoint circles, while the center of $G_{\beta}$ is just a circle. However, $G_{\alpha}$ and $G_{\beta}$ are quasi-isomorphic, as the quotients of $G_{\alpha}$ by the normal subgroup $\{(k+l/2,l/2)\mid (k,l)\in \mathbb Z^2\}/\mathbb Z^2$ is isomorphic to $G_{\beta}$, while the quotient of $G_{\beta}$ by the normal subgroup $(\frac{1}{2}\mathbb Z\oplus \mathbb Z)/\mathbb Z^2$ is isomorphic to $G_{\alpha}$.
	\end{example}
	\begin{prop}\label{prop:minimal_groups}
		The $\succeq$-minimal elements  among the quasi-isomorphism classes of compact Lie groups not isolated up to conjugacy in Lie groups are exactly those that can be represented by groups of the form $\circle^m\rtimes_{\alpha} F$, where $m>0$, and $F$ is a finite group with a faithful and rationally irreducible conjugacy representation $\alpha\colon F\to\textrm{GL}(m,\mathbb Z)\cong \textrm{Aut}(\circle^m)$. 
	\end{prop}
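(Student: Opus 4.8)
The plan is to reduce the statement to a concrete analysis of the quotients of $G=\circle^m\rtimes_\alpha F$, using two basic facts about a continuous epimorphism $\psi\colon A\twoheadrightarrow B$ of compact Lie groups: it cannot increase dimension ($\dim A=\dim\ker\psi+\dim B\ge\dim B$), and it induces a surjection $\pi_0(A)\to\pi_0(B)$ of component groups. By the equivalence (2)$\iff$(4) of Theorem~\ref{t.isolated_characterized}, every compact Lie group not isolated up to conjugacy admits an epimorphism onto some $\circle^m\rtimes F$ with $m>0$, and such a group is itself not isolated; hence every $\succeq$-minimal not-isolated class is represented by a group $G=\circle^m\rtimes_\alpha F$ with $m>0$. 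So it suffices to decide, for such $G$, when $[G]$ is minimal, i.e. when every not-isolated quotient of $G$ is quasi-isomorphic to $G$.

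The first step is to describe the not-isolated quotients. For $N\nsub G$ the identity component $(G/N)^\circ$ is a quotient of the torus $\circle^m$, hence abelian; an abelian group is perfect only if trivial, so by Theorem~\ref{t.isolated_characterized} the quotient $G/N$ is not isolated if and only if $\dim(G/N)>0$, that is $\dim N<m$. Thus $[G]$ is minimal precisely when $G/N\succeq G$ for every closed normal $N\nsub G$ with $\dim N<m$.

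For sufficiency, I would assume $\alpha$ faithful and rationally irreducible and fix $N\nsub G$ with $\dim N<m$. The identity component $N^\circ$ is a connected normal subgroup, hence an $F$-invariant subtorus of $\circle^m=G^\circ$; since $F$-invariant subtori correspond to $F$-invariant $\mathbb{Q}$-subspaces of $\mathbb{Q}^m$, rational irreducibility forces $N^\circ\in\{0,\circle^m\}$, and $\dim N<m$ leaves only $N^\circ=0$, so $N$ is finite. A computation with the conjugation formula $(s,1)(t,f)(s,1)^{-1}=(t+(I-\alpha(f))s,f)$ shows that if $N$ contained an element $(t,f)$ with $f\neq 1$, normality would force $N$ to contain the whole coset $t+(I-\alpha(f))(\circle^m)$; as $\alpha(f)\neq I$ by faithfulness, $(I-\alpha(f))(\circle^m)$ is a positive-dimensional subtorus, contradicting finiteness. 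Hence $N\subseteq\circle^m$ and $G/N=(\circle^m/N)\rtimes_\alpha F$. Writing $\circle^m/N=\mathbb{R}^m/\Lambda$ with $\Lambda\supseteq\mathbb{Z}^m$ the $F$-invariant lattice of $N$, multiplication by the exponent $k$ of $N$ is an $F$-equivariant linear map sending $\Lambda$ into $\mathbb{Z}^m$, so it descends to an $F$-equivariant surjection $\circle^m/N\twoheadrightarrow\circle^m$; extending it by the identity on $F$ yields an epimorphism $G/N\twoheadrightarrow G$, giving $G/N\succeq G$ and hence minimality.

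For necessity, suppose $[G]$ is minimal. If $\alpha$ is not faithful, then $F_0=\ker\alpha$ (embedded as $\{0\}\times F_0$) is normal in $G$ and centralizes $\circle^m$, and $G/F_0\cong\circle^m\rtimes_{\bar\alpha}(F/F_0)$ is not isolated; minimality gives $G/F_0\succeq G$, so I may replace $G$ by this quotient and assume $\alpha$ faithful. If $\alpha$ were then not rationally irreducible, a proper nonzero $F$-invariant $\mathbb{Q}$-subspace of $\mathbb{Q}^m$ would produce a proper $F$-invariant subtorus $S\subseteq\circle^m$ with $0<\dim S<m$; then $G/S$ is not isolated, yet $\dim(G/S)=m-\dim S<m=\dim G$ forbids any epimorphism $G/S\twoheadrightarrow G$, so $G/S\not\succeq G$, contradicting minimality. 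Hence the chosen representative has $\alpha$ faithful and rationally irreducible. The main obstacle is the sufficiency direction—specifically controlling all finite normal subgroups via the conjugation computation and then producing the equivariant surjection $\circle^m/N\twoheadrightarrow\circle^m$; the necessity direction is comparatively soft, resting only on the dimension and component-group inequalities for epimorphisms.
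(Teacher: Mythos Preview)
Your argument is correct and follows essentially the same route as the paper's proof: reduce to semidirect products via Theorem~\ref{t.isolated_characterized}, show that a not-isolated quotient $G/N$ must have $N$ finite using rational irreducibility, force $N\subseteq\circle^m$ using faithfulness, and then recover $G$ as a quotient of $G/N$ via the multiplication-by-$k$ map (which is exactly the paper's passage through the characteristic subgroup $\tilde N=(\tfrac{1}{k}\mathbb Z)^m/\mathbb Z^m$).

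The only noteworthy difference is in the necessity of faithfulness. The paper argues directly that if $\alpha$ is not faithful then $[G]$ is \emph{not} minimal, because $G/\ker\alpha$ has strictly fewer connected components than $G$ and hence cannot surject onto $G$. You instead use minimality to conclude $G/\ker\alpha\succeq G$ and replace $G$ by this quotient. Your version proves exactly what the proposition asks (some representative has the desired form) and is slightly softer, since it does not invoke the component-count obstruction; the paper's version yields the marginally stronger statement that \emph{every} semidirect-product representative of a minimal class already has faithful $\alpha$. Both are valid, and the sufficiency arguments (your explicit conjugation formula versus the paper's commutator-connectedness trick) are just two phrasings of the same computation.
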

	\begin{proof}
		By Theorem \ref{t.isolated_characterized}, a minimal quasi-isomorphism class must contain a group of the form $G=\circle^m\rtimes_{\alpha} F$, where $m>0$, and $F$ is a finite group. If $\alpha$ is not faithful, then $\ker \alpha \leq F\leq G$ is a normal subgroup of $G$ and $G/(\ker \alpha)\preceq G$ is not isolated up to conjugacy as it is the semidirect product of $\circle^m$ and a finite group. On the other hand, $G/(\ker \alpha)\not\succeq G$ since $G/(\ker \alpha)$ has fewer connected components than $G$, therefore the quasi-isomorphism class of $G$ is not minimal. Similarly, if the representation $\alpha\colon F\to GL(m,\mathbb Z)$ has a nontrivial invariant $\mathbb Q$-linear subspace $0<V<\mathbb Q^m$, then the torus $N=\mathbb R V/(V\cap \mathbb Z^m)$ is a closed normal subgroup of $G$ and the quotient group $G/N$ remains not isolated up to conjugacy, while its quasi-isomorphism class is strictly smaller than that of $G$. We conclude that the conditions on $\alpha$ are necessary for the minimality.
		
		To show sufficiency, assume that $\alpha$ is a faithful and rationally irreducible representation of $F$. We have to show that for any closed normal subgroup  $N\triangleleft G$, the quotient group $G/N$ is either isolated up to conjugacy, or quasi-isomorphic to $G$. Consider first the closed normal subgroup $(N\cap \circle^m)^{\circ}\triangleleft G$. Its Lie algebra is an $\alpha(F)$-invariant linear subspace of $\mathbb R^m$, which has a basis consisting of some vectors with integer coordinates, hence, by the rational irreducibility of $\alpha$, the subgroup $(N\cap \circle^m)^{\circ}$ is either the torus $\circle^m$ or the trivial group. If $(N\cap \circle^m)^{\circ}=\circle^m$, then $G/N$ is finite and consequently isolated up to conjugacy. Now assume that $(N\cap \circle^m)^{\circ}$ is the trivial group, i.e., $N$ is finite. For any $n\in N$, the set $\{tnt^{-1}n^{-1}\mid t\in \circle^m\}$ is connected, and contained in the discrete group $N$, hence it has only one element, the identity element $e_G=e_Gne_G^{-1}n^{-1}$ of $G$. Thus, $N$ commutes with the elements of $\circle^m$, consequently, the image of $N$ under the quotient map $G\to F$ is in the kernel of $\alpha$. As $\alpha$ is faithful, this yields $N<\circle^m$. If $k=|N|$, then $N$ is contained in the characteristic subgroup $\Tilde N=(\frac{1}{k}\mathbb Z)^m/\mathbb Z^m<\circle^m$, therefore $ G/\Tilde N \cong G$ is a quotient of the group $G/N$, showing that $G/N$ is quasi-isomorphic to $G$.   
	\end{proof}
	\section{A sufficient condition for the openness of \texorpdfstring{$\cals(f)$}{S(f)}}\label{s.an_open_map}
	
	As we have mentioned in the introduction, the following question originates from Proposition~\ref{p.continuous_image}.
	
	\begin{q:open}
		For which compact groups $G$ and $H$ is it true that for any surjective continuous homomorphism $f\colon G\to H$, the map $\cals(f)\colon\cals(G)\to\cals(H)$ is surjective and open?
	\end{q:open}
	
	We present a partial answer to Question~\ref{q.open}.
	
	\begin{theorem}\label{t.open_map}
		Let $G$ be a compact group and let $H$ be a compact Lie group. Let $f\colon G\to H$ be a surjective continuous homomorphism. Then the map $\cals(f)\colon \cals(G)\to\cals(H)$ is also surjective, continuous and open.
	\end{theorem}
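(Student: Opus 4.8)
The plan is to handle surjectivity, continuity, and openness separately. The first two are routine, and the entire difficulty concentrates in openness, which is where the theorem of Montgomery and Zippin (Theorem~\ref{t.mz}) enters. For \emph{continuity}, I would observe that $\cals(f)$ is just the restriction of $\calk(f)$ to the closed subspace $\cals(G)$ with values in $\cals(H)$: this restriction is well defined because $f(K)$ is a compact subgroup whenever $K$ is, and $\calk(f)$ is continuous by Lemma~\ref{lem:_folytonossag}(1). For \emph{surjectivity}, given $L\in\cals(H)$ the full preimage $f^{-1}(L)$ is a closed, hence compact, subgroup of $G$, and $f(f^{-1}(L))=L$ since $f$ is onto; thus $\cals(f)(f^{-1}(L))=L$.

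For \emph{openness}, fix $K\in\cals(G)$, put $L=f(K)$, and let $\mathcal{O}\subseteq\cals(G)$ be an open neighborhood of $K$. I must produce a neighborhood $\mathcal{N}$ of $L$ in $\cals(H)$ with $\mathcal{N}\subseteq\cals(f)(\mathcal{O})$; equivalently, I must \emph{lift} every subgroup $L'$ near $L$ to some $K'\in\mathcal{O}$ with $f(K')=L'$. The naive choice $K'=f^{-1}(L')$ fails, because $f^{-1}(L')$ always contains the kernel $N=\ker f$ and therefore cannot stay near $K$ unless $N\le K$. The key idea is to first \emph{straighten} $L'$ into $L$ and then lift \emph{inside} $K$. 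Concretely, by the ``moreover'' part of Theorem~\ref{t.mz} applied to $L\le H$, every subgroup $L'$ lying in a suitable open set $U_W\supseteq L$ admits an element $h$ in a prescribed small neighborhood $W$ of $e_H$ with $hL'h^{-1}\le L$; I set $L_0:=hL'h^{-1}\le L$.

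The crucial technical lemma I would isolate is that lifting inside $K$ is continuous at $L$: the assignment $L_0\mapsto\hat K:=(f|_K)^{-1}(L_0)$ sends subgroups $L_0\le L$ that are close to $L$ to subgroups $\hat K\le K$ close to $K$, and satisfies $f(\hat K)=L_0$. Since $\hat K\le K$ is automatic, only the ``$\hat K$ meets every basic open piece of $K$'' direction needs proof, and this follows from the \emph{openness} of the surjection $f|_K\colon K\to L$, which holds by Proposition~\ref{p.quotient_map}: for a basic neighborhood determined by open sets $W_0,\dots,W_m$ of $K$, the images $f(W_j\cap K)$ are open in $L$ and meet $L$, so any $L_0$ meeting each of them (a Vietoris-open condition on $L_0$, valid for $L_0$ near $L$) meets each $f(W_j\cap K)$, whence $\hat K$ meets each $W_j$; and $f(\hat K)=L_0$ follows from surjectivity of $f|_K$.

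Finally I would assemble the neighborhoods in the correct order, using the continuity of the conjugation actions on $\calk(G)$ and $\calk(H)$ (Lemma~\ref{l.right_multip_cont}) and the openness of $f$ (Proposition~\ref{p.quotient_map}) to lift the conjugator. By continuity of conjugation I choose $V\ni e_G$ and $\mathcal{O}_1\ni K$ so that $g\in V$ and $K_1\in\mathcal{O}_1$ imply $g^{-1}K_1g\in\mathcal{O}$; from the lemma I choose $\mathcal{M}\ni L$ so that $L_0\in\mathcal{M}$ with $L_0\le L$ forces $\hat K\in\mathcal{O}_1$; then I shrink to a neighborhood $W\subseteq f(V)$ of $e_H$ and $\mathcal{N}_1\ni L$ so that $h\in W$ and $L'\in\mathcal{N}_1$ imply $hL'h^{-1}\in\mathcal{M}$; and I let $U_W$ come from Montgomery--Zippin. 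For $L'$ in $\mathcal{N}:=\mathcal{N}_1\cap\calv(U_W)\cap\cals(H)$ I then obtain $h\in W$ with $L_0=hL'h^{-1}\in\mathcal{M}$, the lift $\hat K=(f|_K)^{-1}(L_0)\in\mathcal{O}_1$, an element $g\in V$ with $f(g)=h$, and $K':=g^{-1}\hat Kg\in\mathcal{O}$ with $f(K')=h^{-1}L_0h=L'$, giving $\mathcal{N}\subseteq\cals(f)(\mathcal{O})$. I expect the main obstacle to be precisely the continuity-of-lifting lemma together with the careful bookkeeping of these nested neighborhoods, since $G$ is not assumed metrizable and one must argue purely in the Vietoris topology; the conceptual heart, and the only place the Lie hypothesis on $H$ is used, is the Montgomery--Zippin step that replaces the overshooting preimage $f^{-1}(L')$ by a subgroup safely trapped inside $K$.
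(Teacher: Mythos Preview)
Your proposal is correct and takes essentially the same approach as the paper: both handle surjectivity and continuity identically, and for openness both conjugate $L'$ into $L$ via Montgomery--Zippin, lift the result inside $K$ using the openness of $f|_K\colon K\to L$ (Proposition~\ref{p.quotient_map}), and then conjugate back by a lift $g\in G$ of the conjugator $h\in H$. The only differences are cosmetic---you isolate the continuity of $L_0\mapsto(f|_K)^{-1}(L_0)$ as a separate lemma whereas the paper inlines it into the main argument, and the paper routes through a compact neighborhood $\mathcal C\subseteq\cals(H)$ of $L$ to control conjugation uniformly whereas your direct use of joint continuity of conjugation at $(e_H,L)$ suffices.
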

	
	\begin{proof}
		\textbf{Surjectivity.} Clearly, for any $L\in\cals(H)$ the inverse image $f^{-1}(L)$ is a compact subgroup of $G$ and $\cals(f)\big(f^{-1}(L)\big)=L$.
		
		\textbf{Continuity} follows immediately from Lemma~ \ref{lem:_folytonossag}.
		
		\textbf{Openness.} It suffices to prove that for any $K\in\cals(G)$ and any open neighborhood $\Tilde{\calu}$ of $K$, the set $\cals(f)\big(\Tilde{\calu}\big)$ is a neighborhood of $f(K)$.
		
		Fix $K\in\cals(G)$ and a neighborhood $\Tilde{\calu}$ of $K$ arbitrarily. Let $L\defeq f(K)$. By Lemma~\ref{l.right_multip_cont} there exists a neighborhood $B$ of $e_G$ and an open neighborhood $\calu$ of $K$ such that $g^{-1}\calu g \subseteq \Tilde{\calu}$ for all $g \in B$. We may assume that $\calu$ is of the form $\calu=\calv(U_0,\dots,U_n)$, where  $U_0, U_1,\ldots,U_n$ are open sets in $G$ such that $K\subseteq U_0$ and $K\cap U_1\neq\emptyset,\ldots,\allowbreak K\cap U_n\neq\emptyset$.
		
		Since $f|_K\colon K\to L$ is an open map by Proposition \ref{p.quotient_map}, the sets $f|_K(U_1\cap K),\ldots,\allowbreak f|_K(U_n\cap K)$ are relative open in $L$. Hence let $V_1,\dots,V_n$ be open sets in $H$ with $V_i\cap L=f|_K(U_i\cap K)$. Clearly, $\calw\defeq \calv\big(H,V_1,\dots,V_n\big)$ is an open neighborhood of $L$ in $\cals(H)$. Let $\calc\subseteq\calw$ be a compact neighborhood of $L$ in $\cals(H)$.
		By Lemma~\ref{l.right_multip_cont}, there is an open neighborhood $O$ of the identity of $H$ such that for every $h\in O$ and $N\in\calc$ we have $hNh^{-1}\in\calw$. We may also assume $O\subseteq f(B)$ because $f$ is an open homomorphism.
		
		Now by Theorem~\ref{t.mz} there is some open set $W_O\supseteq L$ in $H$ such that for every subgroup $N$ of $H$ that lies in $W_O$ there exists $h\in O$ such that $hNh^{-1}\leq L$. Let $\Tilde\calw \subseteq\calc$ be a neighborhood of $L$ in $\cals(H)$ such that for every $N\in\Tilde\calw$ we have $N\subseteq W_O$. Now it suffices to prove the following claim.
		
		\textbf{Claim.} We have $\Tilde\calw\subseteq\cals(f)(\Tilde{\calu)}$.
		
		Pick any $N\in\Tilde\calw$. By $N\subseteq W_O$ and the definition of $W_O$, there is some $h\in O$ such that $hNh^{-1}\leq L$. Also by $N\in\calc\subseteq\calw$ and the choice of $O$, we have $hNh^{-1}\in\calw$. That is, we can pick points $y_i\in (hNh^{-1}\cap V_i)\subseteq L\cap V_i$ for each $1\leq i\leq n$.
		
		Now consider the compact subgroup $M\defeq(f|_{K})^{-1}(hNh^{-1})$ of $K$. By the definition of $V_i$, there is $x_i\in U_i\cap M$ with $f|_K(x_i)=y_i$ for each $1\leq i\leq n$. Since also $M\subseteq K\subseteq U_0$, we have $M\in\calu$. Let $g\in B$ with $f(g)=h$. Now we have $g^{-1}Mg\in \Tilde{\calu}$ and $\cals(f)(g^{-1}Mg)=f(g^{-1}Mg)=f(g)^{-1}f(M)f(g)=h^{-1}(hNh^{-1})h=N$, which proves the claim.
	\end{proof}
	
	\section{Acknowledgements}
	
	The first author was supported by the National Research Development and Innovation Office (NKFIH) grant No K-128862. During the research he also enjoyed the hospitality of the Alfr\'ed R\'enyi Institute of Mathematics as a guest researcher. 
	
	The project leading to this application has received funding from the European Research Council (ERC) under the European Union’s Horizon 2020 research and innovation programme (grant agreement No 741420).
	
	The second author was supported by the ÚNKP-21-3 New National Excellence Program of the Ministry for Innovation and Technology from the source of the National Research, Development and Innovation Fund.
	
	The third author was supported by the ÚNKP-21-1 New National Excellence Program of the Ministry for Innovation and Technology from the source of the National Research, Development and Innovation Fund.
	
	\bibliographystyle{IEEEtran}
	\bibliography{biblio}
	
\end{document}